\newcommand{\bz}{\bar{z}}
\newcommand{\tu}{\tilde{u}}
\newcommand{\CI}{\mathcal{C}^{\infty}}
\newcommand{\RR}{\mathbf{R}}
\newcommand{\bP}{\bar{P}}
\newcommand{\Lap}{\Delta}
\newcommand{\RNum}[1]{\uppercase\expandafter{\romannumeral #1\relax}}
\newcommand{\df}{\coloneqq}
\newcommand{\la}{\langle}
\newcommand{\ra}{\rangle}
\newcommand{\RRR}{\RR^2\backslash\{0\}}
\newcommand{\pa}{P_{\alpha}}
\newcommand{\dom}{\mathcal{D}}
\newtheorem{theorem}{Theorem}[section]
\newtheorem*{theorem*}{Theorem}
\newtheorem{lemma}[theorem]{Lemma}
\newtheorem{definition}{Definition}[section]
\newtheorem{proposition}[theorem]{Proposition}
\newtheorem{remark}{Remark}[section]
\title{Diffraction of the Aharonov--Bohm Hamiltonian}
\author{Mengxuan Yang}
\date{\today}
\address{Department of Mathematics, Northwestern University}
\email{mxyang@math.northwestern.edu}
\begin{document}

\maketitle

\begin{abstract}
    In this paper, we compute the diffractive wave propagator of the Aharonov--Bohm effect \cite{aharonov1959significance} on $\RR^2$ with a single solenoid using a technique of moving solenoid location. In addition, we compute the corresponding diffraction coefficient which is the principal symbol of the diffractive propagator. This paper proves the propagation of singularities of the Aharonov--Bohm Hamiltonian.
\end{abstract}
\section{Introduction}
In this paper we study the diffraction behavior of the electromagnetic Hamiltonian on $\RR^2$:  
\begin{equation*}
    \pa=\left(\frac{1}{i}\nabla-\Vec{A}\right)^2,
\end{equation*}
where $\Vec{A}=-\alpha\left(-\frac{y}{x^2+y^2}, \frac{x}{x^2+y^2} \right)^T$ is an electromagnetic vector potential and $\alpha$ depends on the magnetic field strength at $r=0$. This operator corresponds to the model of one infinitely thin and long solenoid placed at the $z$-axis in $\RR^3$ with the aforementioned vector potential $\Vec{A}$ and the magnetic flux only at $z$-axis; the translation invariance along $z$-axis therefore reduces the model to the Hamiltonian $\pa$ on $\RR^2$.

The presence of such a vector potential generates a $\delta$-type magnetic field
$$\Vec{B}=\nabla\times\Vec{A}=\Lap\log(|\mathbf{x}|)=-2\pi\alpha\delta,$$
which was first studied by Aharonov and Bohm in \cite{aharonov1959significance} to show the significance of electromagnetic vector potentials in quantum mechanics. Away from the solenoid, although there is no magnetic field, quantum particles still experience a phase shift while passing two sides of the solenoid, which is not observable from the classical mechanical viewpoint. The phase difference generates an interference pattern which is called the Aharonov--Bohm effect. 

The diffraction refers to the effect that when a propagating wave/quantum particle encounters a corner of an obstacle or a slit, its wave front bends around the corner of the obstacle and propagates into the geometrical shadow region. When studying the wave equation on a singular domain, the singularities of the wave equation likewise split into two types after they encounter the singularity of the domain. One propagates along the natural geometric extension of the incoming ray, which is called the geometric wave front, while other singularities emerge at the cone point and start propagating along all outgoing directions as a spherical wave, which is called the diffractive wave. The singular vector potential $\Vec{A}$, similar to a singular domain, also generates a diffractive wave which is the central object of our study. For a detailed discussion of the diffractive wave and the geometric wave along with the corresponding propagators, see Section \ref{background}. Note that the Hamiltonian with the singular vector potential is closely related to the Laplacian on cones, for which the diffraction has been investigated thoroughly in \cite{cheeger1982diffraction}\cite{melrose2004propagation}.  

We prove the following theorem for the Friedrichs extension of $\pa$ from $\CI_c(\RRR)$. (See Section \ref{extension} for discussion of self-adjoint extensions.)
\begin{theorem*}
In polar coordinates, for $|\theta_1-\theta_2|\neq\pi$, the diffractive part of the Schwartz kernel of the sine Aharonov--Bohm propagator $\frac{\sin(t\sqrt{\pa})}{\sqrt{\pa}}$ is a polyhomogeneous conormal distribution with respect to $\{t=r_1+r_2\}\subset \RR_t \times (\RR^2 \backslash \{0\})\times (\RR^2 \backslash \{0\})$ with the integral expression:
\begin{equation*}
     E_{\emph{D}}(t,r_1,r_2,\theta_1,\theta_2)=\int_{\RR}e^{i\lambda(t-r_2-r_1)}a(t,r_1,r_2,\theta_1,\theta_2;\lambda)d\lambda.
\end{equation*}
Here, the amplitude $a$ has the asymptotic expansion:
\begin{equation*}
    a\sim-\frac{\sin\pi\alpha}{2\sqrt{r_1r_2}}\cdot\frac{e^{-i\theta_1}+e^{i\theta_2}}{\cos\theta_1+\cos\theta_2}\cdot\lambda^{-1}+\mathcal{O}(\lambda^{-2})\in S^{-1}\text{ as } \lambda\rightarrow\infty,
\end{equation*}
where its leading order term is also known as the \emph{diffraction coefficient}. $S^{-1}$ is the classical symbol class of order $-1$. 
\end{theorem*}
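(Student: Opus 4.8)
The plan is to combine separation of variables with the moving‑solenoid technique. First I would diagonalize $\pa$ using the rotation symmetry: on the angular mode $e^{ik\theta}$ the operator acts as the Bessel‑type operator $L_k\df-\partial_r^2-r^{-1}\partial_r+(k-\alpha)^2r^{-2}$ on $L^2(r\,dr)$, whose functional calculus is the Hankel transform of order $\nu_k\df|k-\alpha|$. Hence the Schwartz kernel of $\frac{\sin(t\sqrt{\pa})}{\sqrt{\pa}}$ is
$$\frac{1}{2\pi}\sum_{k\in\mathbf{Z}}e^{ik(\theta_1-\theta_2)}\int_0^\infty \sin(t\lambda)\,J_{\nu_k}(\lambda r_1)J_{\nu_k}(\lambda r_2)\,d\lambda.$$
Expanding $\sin(t\lambda)$ in exponentials and inserting the large‑argument asymptotics $J_\nu(\lambda r)\sim(\tfrac{2}{\pi\lambda r})^{1/2}\cos(\lambda r-\tfrac{\nu\pi}{2}-\tfrac{\pi}{4})$ produces the four phases $t\mp(r_1+r_2)$ and $t\mp(r_1-r_2)$; the first yields the diffractive front $\{t=r_1+r_2\}$, and after summing in $k$ the geometric front $\{t^2=r_1^2+r_2^2-2r_1r_2\cos(\theta_1-\theta_2)\}$ appears as well. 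Since $|\theta_1-\theta_2|\ne\pi$ forces the geometric distance to be strictly less than $r_1+r_2$, localizing $t$ near $r_1+r_2$ (with $r_1,r_2$ away from $0$) isolates the diffractive singularity and defines $E_{\mathrm D}$; propagation of singularities for $\pa$ (proved in this paper, in analogy with \cite{cheeger1982diffraction,melrose2004propagation}) together with a parametrix built from the fact that $\pa$ is gauge‑equivalent to $-\Lap$ away from the solenoid shows that $E_{\mathrm D}$ is polyhomogeneous conormal to $\{t=r_1+r_2\}$, reducing the theorem to computing the amplitude $a$.

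To compute $a$ I would move the solenoid. Writing $\pa^{p}$ for the Hamiltonian with the flux at $p$, translation gives $E^{p}(t,x,y)=E^{0}(t,x-p,y-p)$, so the diffractive front becomes the smooth hypersurface $\{t=|x-p|+|y-p|\}$ and the diffraction coefficient becomes a function $\sigma(p)$ along it. In any simply connected region missing $p$ the geometric wave near this front is the free two‑dimensional sine propagator times the Aharonov--Bohm holonomy $e^{i\alpha\gamma(p)}$, an explicit factor that jumps by $e^{\pm2\pi i\alpha}$ as $p$ crosses the segment $\overline{xy}$ — the shadow boundary — while $E^{p}$ stays continuous there. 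Matching the jump of the diffracted wave to minus that of the geometric wave across the shadow boundary, and using that $\sigma$ obeys the transport equation along the diffracted rays with this jump as data, determines $\sigma$ and hence $a$. As an independent check one resums the $k$‑series directly: the coefficient of $e^{i\lambda(t-r_1-r_2)}$ is, up to normalization, $\lambda^{-1}(r_1r_2)^{-1/2}\sum_{k}e^{i\pi|k-\alpha|}e^{ik(\theta_1-\theta_2)}$, and this Abel‑summable series splits at $k=\lfloor\alpha\rfloor$ into two geometric series whose sum is a constant times $\sin\pi\alpha\,(1+e^{i(\theta_1-\theta_2)})^{-1}$, i.e. a constant times $\frac{e^{-i\theta_1}+e^{i\theta_2}}{\cos\theta_1+\cos\theta_2}$; carrying the full Bessel expansion through then produces the lower‑order symbol terms. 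Note that $\sin\pi\alpha$ vanishes at integer flux, consistent with the vector potential being gauge‑trivial there.

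The main obstacle is uniformity up to the front together with the bookkeeping of constants: the Bessel asymptotics degenerate as $\lambda r_j\to0$ and each order of the $k$‑series must be summed as a distribution, so the real work is to show that the formal expansion is a genuine classical symbol expansion with remainders in $S^{-2},S^{-3},\dots$ holding uniformly in a neighbourhood of $\{t=r_1+r_2\}$, and to pin down every normalization factor so as to recover the stated coefficient $-\frac{\sin\pi\alpha}{2\sqrt{r_1r_2}}\cdot\frac{e^{-i\theta_1}+e^{i\theta_2}}{\cos\theta_1+\cos\theta_2}$ exactly. The moving‑solenoid argument is designed precisely to trade the singular family of Hankel transforms for a smooth family of translated propagators; its price is a careful shadow‑boundary matching as $p$ tends to $\overline{xy}$, which is exactly the excluded configuration $|\theta_1-\theta_2|=\pi$ where the geometric and diffractive fronts collide.
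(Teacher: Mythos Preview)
Your proposal sketches two routes to the coefficient---shadow-boundary matching and direct mode resummation---but neither closes, and both differ substantially from what the paper actually does. The shadow-boundary argument pins down the diffraction coefficient by its jump at $|\theta_1-\theta_2|=\pi$, which is precisely the configuration excluded from the statement; propagating that data to general angles via ``the transport equation along the diffracted rays'' is plausible heuristics, but you have not set up that transport equation, identified its characteristics, or argued uniqueness, so this is a genuine gap rather than a routine step. The direct resummation is, as you yourself flag, only formal: the large-argument expansion $J_\nu(\lambda r)\sim(\tfrac{2}{\pi\lambda r})^{1/2}\cos(\lambda r-\tfrac{\nu\pi}{2}-\tfrac{\pi}{4})$ is not uniform in $\nu$ (remainders grow like $\nu^2/(\lambda r)$), so summing over $k\in\mathbb{Z}$ termwise does not produce a classical symbol without a separate argument you have not supplied. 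Finally, invoking ``propagation of singularities for $\pa$ (proved in this paper)'' to obtain conormality of $E_{\mathrm D}$ is circular: that conormality is the content of the theorem.

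The paper's mechanism sidesteps all three issues. Instead of matching across the shadow boundary, it \emph{differentiates} in the solenoid location using the twisted translation $T_x=e^{-i\alpha\theta}\partial_x e^{i\alpha\theta}$ (plain $\partial_x$ does not commute with $\pa$), obtaining $\partial_s\big[(\Phi^s_{T_x})^*E\big]\big|_{s=0}$ as the Schwartz kernel of $[T_x,W(t)]$, and then by Duhamel $[T_x,W(t)]=-\int_0^t W(t-s)\,[T_x,\pa]\,W(s)\,ds$. The decisive computation (Proposition~\ref{distribution_QPu}) is that on the Friedrichs domain $[T_x,\pa]$ is supported at the origin and has \emph{rank two}, expressible through the functionals $L_0,L_{-1}$ that read off the leading coefficients of the borderline modes $k=0,-1$; this relies on the domain analysis of Section~\ref{extension}. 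As a result $W(t)L_j$ and $L_j\circ W(t)$ each involve a \emph{single} Bessel order, so the non-uniformity in $\nu$ never enters and conormality of the differentiated propagator follows directly from one-variable Bessel asymptotics (Lemmas~\ref{Propagated_distributions}--\ref{W_tL}). Integrating the differentiated propagator back over $s\in[0,\infty)$ by stationary phase then yields $E_{\mathrm D}$ with the stated principal symbol. The Friedrichs-domain characterization and the rank-two commutator identity are the ingredients your outline is missing; your Abel-resummation check is exactly the formal verification the paper records in a remark, not the proof.
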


A formal computation based on the results of \cite{ford2017diffractive} and \cite{yang2020diffraction} also yields the same diffraction coefficient. We prove this result using a technique similar to the ``differentiating the cone point" developed in \cite{ford2018wave} by Ford, Hassell and Hillairet. The idea is that the geometric wave is invariant under the translation of the solenoid, therefore differentiating along the translation yields a purely diffractive wave. However, the presence of the vector potential prevents us from translating the solenoid directly; it turns out that introducing an extra term corresponding to the vector potential into the translation vector field enable us to generalize the method to our situation. In particular, it corresponds to the covariant derivative on the $U(1)$-bundle over $\RRR$.

The spectral and scattering theory of the Aharonov--Bohm Hamiltonian has been widely studied. The single solenoid case is completely solvable using a mode-by-mode decomposition method (see \cite{aharonov1959significance} \cite{ruijsenaars1983aharonov} \cite{adami1998aharonov} \cite{stovicek1998aharonov} \cite{pankrashkin2011spectral} for one infinitely thin solenoid and \cite{de2008mathematical} \cite{de2010scattering} for one solenoid with finite width), although from the perspective of wave propagation it is non-trivial to see how singularities of the solution to the wave equation propagate if one uses the infinite sum of mode solutions directly, and the computation of propagation of singularities of the Aharonov--Bohm Hamiltonian seems not to have been carried out. The scattering theory of the Aharonov--Bohm Hamiltonian with two or multiple solenoids was widely studied by Alexandrova--Tamura\cite{alexandrova2011resonance} \cite{alexandrova2014resonances}, Ito--Tamura\cite{ito2001aharonov}, Mine\cite{mine2005aharonov}, Nambu\cite{nambu2000aharonov},  {\v{S}}t'ov{\'\i}{\v{c}}ek\cite{vstovivcek1991scattering} \cite{vstovivcek1991krein} \cite{vstovivcek1994scattering}, Tamura\cite{tamura2007semiclassical} along with many other authors.

The novelty of this paper is the following: To the best of our knowledge, this is the first description of the Aharonov--Bohm wave propagator that is not using mode-by-mode solutions, which enables us, for the first time, to understand the propagation of singularities (cf. \cite[Section 6.1]{duistermaat1972fourier}) of the Aharonov--Bohm Hamiltonian. Unlike mode-decomposition results, our result can be applied to diffraction by multiple solenoids, using finite speed of propagation. In fact, in a recent project \cite{yang2021trace}, the author uses the result to compute singularities of the wave trace and obtains a lower bound for the number of resonances for the scattering problem with multiple solenoids.

% It seems that the author probably has not stated clearly enough in some places that the author has computed the singularities of the Schwartz kernel of the wave propagator, not the singularities of the wave function (which are singular at $0$ and actually can be computed explicitly using the mode-by-mode solution). It seems to the author that the references on system of two anyons pointed out by the report have discussed the singular behavior of wave functions in the domain of the Aharonov--Bohm self-adjoint realizations, as mentioned by the referee, but the singularities of the Schwartz kernel of the wave propagator seems not have been discussed explicitly there. The author has corrected all the paragraphs that had not been clear enough, and in the meantime wants to sincerely apologize for this misunderstanding.  

\subsection*{Acknowledgement}
The author is grateful to Luc Hillairet and Jared Wunsch for proposing this topic, and to Dean Baskin, Luc Hillairet and two anonymous referees for helpful comments on the manuscript. The author is especially very grateful to Jared Wunsch for many helpful discussions as well as valuable comments on this manuscript.  

\section{Preliminaries}
\label{background}
In this section, we present some preliminary backgrounds on the Aharonov--Bohm wave propagator and the diffraction phenomenon. 

The operator we are interested in is the electromagnetic Hamiltonian 
\begin{equation}
    \pa=\left(\frac{1}{i}\nabla-\Vec{A}\right)^2
\end{equation}
with $\Vec{A}=-\alpha\left(-\frac{y}{x^2+y^2}, \frac{x}{x^2+y^2} \right)^T$ an electromagnetic vector potential and $\alpha$ depending on the field strength of the magnetic field in the solenoid at $r=0$. Note that this is a positive symmetric operator defined on $\CI_c(\RRR)\subset L^2(\RR^2)$. The magnetic vector potential corresponds to the case of an infinitely thin single solenoid placed at the origin, and it can generate the so-called Aharonov--Bohm effect\cite{aharonov1959significance}, which suggest that the electromagnetic vector potential is somehow more ``physical" that the magnetic field. Note the magnetic potential chosen here is gauge invariant under the addition of a curl free vector field, and it satisfies both Coulomb and Lorenz gauge conditions. 

We also define the wave operator corresponding to this electromagnetic Hamiltonian:
\begin{equation*}
    \Box=D_t^2-\pa,
\end{equation*}
where $D_t=\frac{1}{i}\partial_t$. 

Using the polar coordinates, the Hamiltonian can be written as 
\begin{equation*}
    \pa=D_r^2-\frac{i}{r}D_r+\frac{1}{r^2}\left(D_{\theta}+\alpha\right)^2,
\end{equation*}
where $D_r=\frac{1}{i}\partial_{r}$ and $D_{\theta}=\frac{1}{i}\partial_{\theta}$. This is now analogous to the Laplacian on cones which can be treated by using the separation of variables and the conic functional calculus by Cheeger--Taylor\cite{cheeger1982diffraction}; also see \cite{watson1995treatise} for Hankel transforms. Therefore, the Schwartz kernel of the wave propagator $W(t)=\frac{\sin(t\sqrt{\pa})}{\sqrt{\pa}}$ can be written as  
\begin{equation}
    W(t)=\sum_{k\in\mathbb{Z}}\int_0^{\infty}\frac{\sin(t\lambda)}{\lambda}J_{\nu_k}(\lambda r_1)J_{\nu_k}(\lambda r_2)\lambda e^{ik(\theta_1-\theta_2)}d\lambda
\end{equation}
where $\nu_k=|k+\alpha|$. Here we confuse the propagator with its Schwartz kernel. Without loss of generality, we assume $\alpha\in(0,1)$ since the shift of integers in $\alpha$ only corresponds to the shift between different eigenmodes $\varphi_k=e^{ik\theta}$ with the same coefficients. 

Now we give a brief introduction to the diffractive geometry. For a more detailed presentation we refer to \cite{melrose2004propagation} and \cite{ford2017diffractive}.

Consider the diffraction with respect to the solenoid at the origin. There are two types of broken geodesics passing through the solenoid, see Figure \ref{dgg}, which corresponds to two types of waves emanating from the solenoid after diffraction:

\begin{definition} 
Suppose $\gamma: (-\epsilon, +\epsilon)\rightarrow \RR^2$ is a continuous piecewise \emph{geodesic} on $\RR^2$ arriving at the solenoid only at time $t=0$, then: \begin{itemize}
\item The curve $\gamma$ is a \emph{diffractive geodesic} if the intermediate terminal point $\gamma(0_-)$ and the initial point $\gamma(0_+)$ satisfies
$$\gamma(0_-)=\gamma(0_+)=0.$$
\item The curve $\gamma$ is a \emph{geometric geodesic} if it is a straight line passing through $0$ at $t=0$.
\item The curve $\gamma$ is a \emph{strictly diffractive geodesic} if it is a diffractive geodesic but not a geometric geodesic.
\end{itemize}
\end{definition}
The geometric geodesics are those that are locally realizable as limits of geodesics in $\RRR$, i.e., straight lines passing through the origin.

\begin{figure}[H]
  \includegraphics[width=0.6\linewidth]{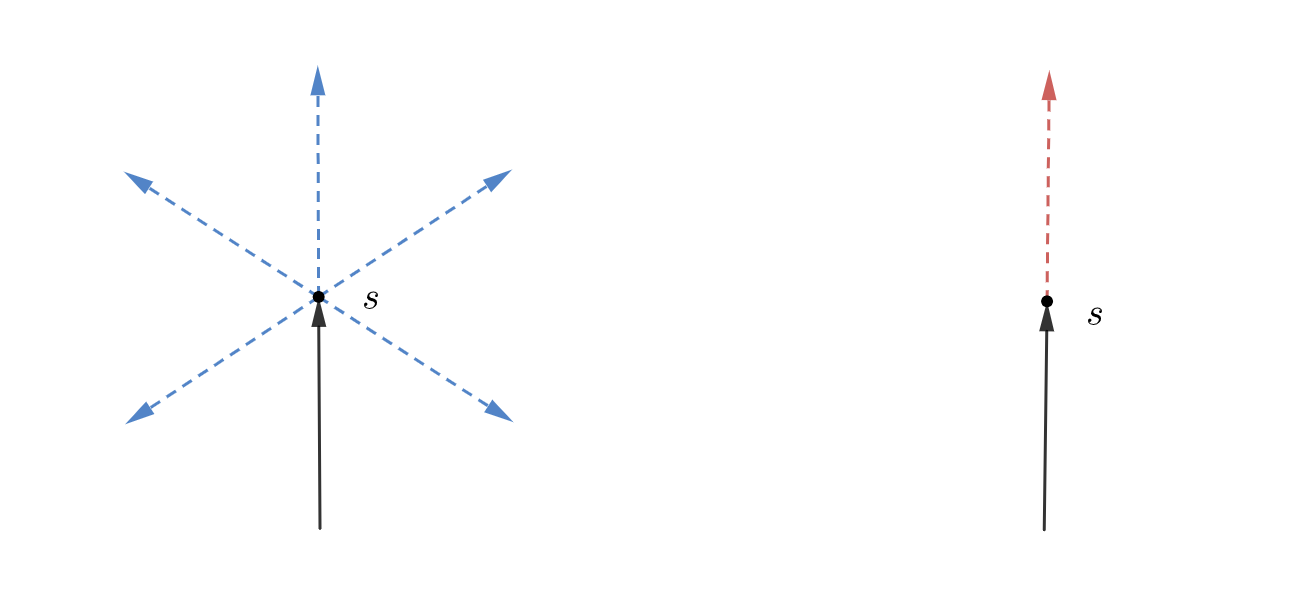}
  \caption{Diffractive and geometric geodesics}
  \medskip
  \small
    The blue rays on the left (together with the incoming ray) are diffractive geodesics at a solenoid $s$, while the red ray on the right is a geometric geodesic passing through $s$.
  \label{dgg}
\end{figure}

On smooth manifolds, singularities of the wave equation propagate along geodesics following standard propagation of singularities \cite{duistermaat1972fourier}. However, due to the presence of the solenoid, there are two different types of singularities--the diffractive and geometric wavefront--which propagate along the corresponding generalized geodesics defined above. Using polar coordinates $q_i=(r_i,\theta_i),i=1,2$, they are defined as two conic Lagrangian submanifolds $\Lambda^{\text{D}}$ and $\Lambda^{\text{G}}$ on $(0,\infty)_t\times T^*(\RRR)\times T^*(\RRR)$ as follows:
\begin{itemize}
    \item The diffractive wavefront is
    \begin{equation}
    \nonumber
    \Lambda^{\text{D}}\df N^*\{t=r_1+r_2\}
    \end{equation}
    where $N^*Y$ denotes the conormal bundle of the submanifold $Y$; 
    \item The geometric wavefront is
    \begin{equation}
    \nonumber
    \Lambda^{\text{G}}\df N^*\{t=|q_1-q_2|\};
    \end{equation}
    \item We also define their intersection 
    \begin{equation}
    \nonumber
    \Sigma\df \Lambda^{\text{D}}\cap \Lambda^{\text{G}}= \Lambda^{\text{D}}\cap \{|\theta_1-\theta_2|=\pi\}.
    \end{equation}
\end{itemize}
Therefore, modulo smooth remainders and away from $\Sigma$, the Schwartz kernel $E$ of the distribution $\frac{\sin(t\sqrt{\pa})}{\sqrt{\pa}}\delta_{q_2}$ can be decomposed into two parts:
\begin{equation}
    E(t)\equiv E_{\text{D}}(t)+ E_{\text{G}}(t).
\end{equation}
More precisely, the singularities of $E_{\text{D}}(t)$ are at $\Lambda^{\text{D}}$, while the singularities of $E_{\text{G}}(t)$ are at $\Lambda^{\text{G}}$. We define $E_{\text{D}}(t)$ to be the \emph{diffractive propagator} and $E_{\text{G}}(t)$ to be the \emph{geometric propagator}. For any fixed $t$ and $q_2$, the projection of singularities to the base space is described in Figure \ref{dgw}.

\begin{figure}[H]
  \includegraphics[width=0.5\linewidth]{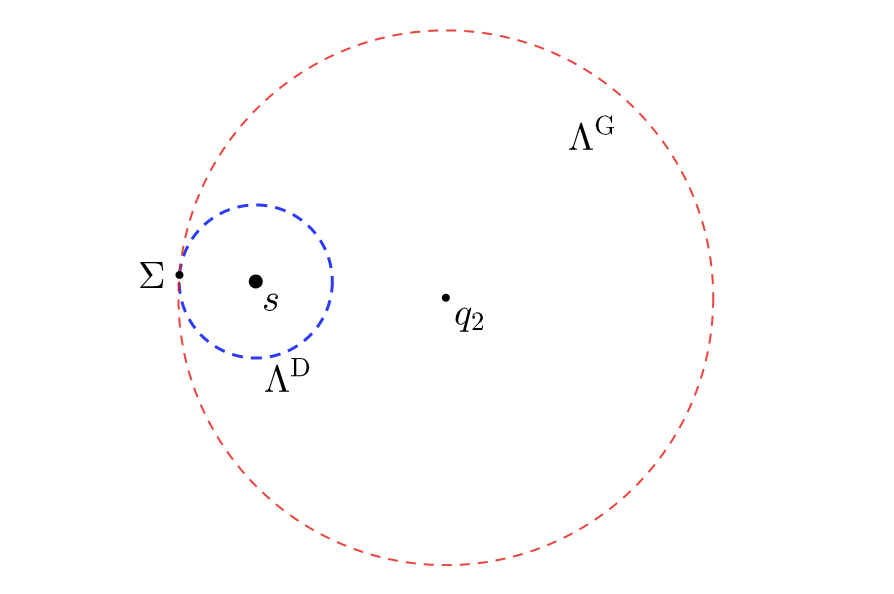}
  \caption{Diffractive and geometric wavefront}
  \medskip
  \small
    Consider a spherical wave emanating from $q_2$. The red circle is the geometric wavefront projected to the base; the blue circle is the diffractive wavefront projected to the base. The projection of their intersection $\Sigma$ is a single point.
  \label{dgw}
\end{figure}

In this paper, we focus our study on the diffractive propagator away from $\Sigma$, i.e., we consider $E_{\text{D}}(t,r_1,r_2,\theta_1,\theta_2)$ with $|\theta_1-\theta_2|\neq \pi$.

\section{The Friedrichs extension and domains}
\label{extension}
The self-adjoint extensions of the Aharonov--Bohm Hamiltonian on $\RR^2$ were studied in \cite{adami1998aharonov} and \cite{stovicek1998aharonov} using the method of deficiency subspaces. Since then they have been investigated by many authors. It was later also studied in \cite{derezinski2011homogeneous} \cite{derezinski2017schrodinger} \cite{derezinski2018radial} \cite{derezinski2020radial} as an application of Schr\"odinger operators with inverse square potential on the half-line and in \cite{correggi2018hamiltonians} \cite{correggi2021magnetic} as the Hamiltonian of a system consisting of two anyons. For simplicity and physical reasons, we focus our discussion on the Friedrichs extension of the magnetic Hamiltonian in the main text of this paper. In this section, we briefly discuss the Friedrichs extension of the Hamiltonian using the theory of deficiency indices for later purposes; we focus on the asymptotic behavior of the self-adjoint extensions near $0$. Similar results have previously been established in \cite{exner2002generalized} \cite{mine2005aharonov}; we include this material for the sake of completeness.

Consider the domain $\mathcal{D}(\pa)=\CI_c(\RR^2\backslash\{0\})$ of the positive symmetric operator $\pa$. We first define the closure $\bar{\pa}$ of $\pa$ under the graph norm:
$$\|u\|_{\pa}\df\|u\|_{L^2}+\|\pa u\|_{L^2},$$ 
and denote the domain of the operator closure $\bar{\pa}$ by $\mathcal{D}(\bar{\pa})$.
From \cite[Proposition 3.6]{gil2003adjoints}, we have a characterization\footnote{Note that we use the standard $L^2$-weight here rather that the $b$-weight used in \cite{gil2003adjoints}. This gives a one order difference in $r$ comparing to \cite{gil2003adjoints}.}:
\begin{equation}
\label{closure}
    \mathcal{D}(\bar{\pa})=r^2H^2_b(\RRR),
\end{equation}
which is independent of $\alpha$, where $H^2_b(\RRR)$ stands for b-Sobolev space defined by 
$$f\in H^2_b(\RRR)\Longleftrightarrow V_1V_2f\in L^2(\RRR)\text{ for all } V_1,V_2\in\mathcal{V}_b(\RRR),$$
where $\mathcal{V}_b=\text{span}_{\CI(\RR^2)}\{r\partial_r,\partial_{\theta}\}$.

%Thus if we take a smooth function $\rho\in\CI_c(\RR^2)$ with $\rho=1$ for $r\leq 1$ and $\rho=0$ for $r\geq2$, then we have the following decomposition:
%$$H^2(\RR^2)=\mathscr{\bar{D}}\oplus\text{Span}\{\rho\}.$$ 

Now we construct the Friedrichs extension of $\pa$. We write the Friedrichs extension of $\pa$ as $\pa^{\text{Fr}}$, and it is the unique self-adjoint extension whose domain is contained in the form domain:
$$\mathcal{D}(\pa^{1/2}):=\left\{u\in L^2(\RR^2): \|u\|_{L^2}+\|\pa^{1/2}u\|_{L^2}<\infty\right\},$$
where $\|\pa^{1/2}u\|_{L^2}$ is the norm induced by the quadratic form: 
$$q(u,v):=(\pa u,v)$$
in the sense of distributions.
The domain of the adjoint $\bar{\pa}^*$ is
$$\mathcal{D}(\bar{\pa}^*)=\left\{u\in L^2(\RR^2):\|u\|_{L^2}+\|\pa u\|_{L^2}<\infty\right\}$$
where $\pa$ acts distributionally. Following the theory of deficiency subspaces, we have the relation:
\begin{equation}
\label{deficiency}
    \mathcal{D}(\bar{\pa}^*)=\mathcal{D}(\bar{\pa})\oplus\mathcal{K}_-\oplus\mathcal{K_+},
\end{equation}
where $\mathcal{K}_-$ and $\mathcal{K_+}$ are deficiency subspaces with deficiency indices $(2,2)$ (c.f. \cite{adami1998aharonov}, \cite{stovicek1998aharonov}) defined by 
$$\mathcal{K}_{\pm}\df\text{Ker}(\bar{\pa}^*\mp i).$$ 
Therefore, every self-adjoint extension of $\bar{\pa}$ corresponds to a unitary map $U: \mathcal{K_+}\rightarrow\mathcal{K}_-.$ The following lemma determines the domain of the adjoint operator $\mathcal{D}(\bar{\pa}^*)$:
\begin{lemma}
\label{adjoint domain}
Let $\rho\in\CI_c((\RR_{+,r})$ be a smooth cutoff function with $\rho\equiv1$ for $r\leq1$. Then the domain $\mathcal{D}(\bar{\pa}^*)$ is 
\begin{equation}
    \mathcal{D}(\bar{\pa}^*)=\mathcal{D}(\bar{\pa})\oplus\text{span}_{\mathbb{C}}\{r^{\alpha}\rho,r^{-\alpha}\rho, r^{1-\alpha}e^{-i\theta}\rho, r^{\alpha-1}e^{-i\theta}\rho\}
\end{equation}
\end{lemma}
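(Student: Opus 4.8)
The plan is to work mode-by-mode in the angular variable and reduce the problem to a family of ordinary differential operators on the half-line, then identify which solutions of $(\bar{\pa}^* \mp i)u = 0$ fail to lie in $\mathcal{D}(\bar{\pa})$. First, I would decompose $u \in L^2(\RR^2)$ as $u = \sum_{k \in \mathbb{Z}} u_k(r) e^{ik\theta}$; since $\pa$ acts on the $k$-th mode as the Bessel-type operator $L_k = D_r^2 - \tfrac{i}{r}D_r + \tfrac{1}{r^2}(k+\alpha)^2$, the condition $u \in \mathcal{D}(\bar{\pa}^*)$ (i.e. $u, \pa u \in L^2$) decouples across modes. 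Away from $r = 0$, $L_k$ is a regular elliptic ODE with no $L^2$ obstruction, so the only possible extra contributions to $\mathcal{D}(\bar{\pa}^*)$ beyond $\mathcal{D}(\bar{\pa})$ come from the local behavior near $r = 0$. Near $r = 0$, the indicial roots of $L_k$ are $r^{\pm\nu_k}$ with $\nu_k = |k+\alpha|$ (and a logarithmic modification only when $\nu_k = 0$, which does not occur for $\alpha \in (0,1)$).

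Next I would determine, for each $k$, which indicial solutions are locally $L^2$ near $r = 0$: $r^{\pm\nu_k}$ is in $L^2(r\,dr)$ near $0$ iff $\pm\nu_k > -1$, i.e. $\nu_k < 1$. Since $\alpha \in (0,1)$, this holds precisely for $k = 0$ (giving $\nu_0 = \alpha$, both $r^{\alpha}$ and $r^{-\alpha}$ admissible) and $k = -1$ (giving $\nu_{-1} = 1-\alpha$, both $r^{1-\alpha}$ and $r^{\alpha-1}$ admissible); for all other $k$, $\nu_k \geq 1$ and only the decaying solution $r^{\nu_k}$ is $L^2$, and that one already lies in the closure $\mathcal{D}(\bar{\pa}) = r^2 H^2_b$ after subtracting an element of the closure — more precisely, for $|k+\alpha| \geq 1$ the maximal and minimal domains of $L_k$ coincide, so these modes contribute nothing new. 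Thus the four-dimensional defect space is spanned by the four functions $r^{\alpha}e^{i0\cdot\theta}$, $r^{-\alpha}e^{i0\cdot\theta}$, $r^{1-\alpha}e^{-i\theta}$, $r^{\alpha-1}e^{-i\theta}$, which matches the deficiency index $(2,2)$ recorded after \eqref{deficiency}.

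The final step is to convert this local statement into the global direct-sum decomposition. I would fix the cutoff $\rho$ and observe that each candidate function $r^{\alpha}\rho$, etc., lies in $\mathcal{D}(\bar{\pa}^*)$ (it is $L^2$, and applying $L_k$ to an indicial solution produces something supported where $\rho$ is non-constant, hence smooth and compactly supported away from $0$, so in $L^2$), while none lies in $\mathcal{D}(\bar{\pa}) = r^2 H^2_b(\RRR)$ by \eqref{closure} — indeed elements of $r^2 H^2_b$ vanish to order strictly greater than the orders $\alpha, -\alpha, 1-\alpha, \alpha-1$ allow, so the four functions are linearly independent modulo $\mathcal{D}(\bar{\pa})$. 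Given any $u \in \mathcal{D}(\bar{\pa}^*)$, elliptic regularity for $L_k$ on $r > 0$ plus the indicial analysis shows $u$ minus a suitable linear combination of the four functions has the improved vanishing at $r = 0$ characterizing $r^2 H^2_b$, hence lies in $\mathcal{D}(\bar{\pa})$; this gives the claimed sum, and it is direct by the independence just noted. The main obstacle I anticipate is the bookkeeping in the last step: carefully justifying that the improved decay of $u - (\text{combination})$ at $r = 0$ — obtained mode-by-mode — assembles into genuine membership in the b-Sobolev space $r^2 H^2_b(\RRR)$ uniformly in $k$, which requires controlling the $k$-dependence of the constants in the one-dimensional estimates and invoking \eqref{closure} rather than re-deriving it.
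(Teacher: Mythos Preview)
Your proposal is correct and reaches the same conclusion, but the route is genuinely different from the paper's. The paper proceeds via von Neumann's deficiency theory in the most literal way: it explicitly solves $(\pa \mp i)u = 0$ mode-by-mode, obtaining modified Bessel functions $I_{|k+\alpha|}(\beta_\pm r)$ and $K_{|k+\alpha|}(\beta_\pm r)$, discards the $I$'s by their exponential growth at infinity, and then uses the small-$r$ asymptotics of $K_\nu$ to see that only $k=0,-1$ contribute and that the first two terms of each expansion give exactly the four powers $r^{\pm\alpha}$, $r^{\pm(1-\alpha)}$; the cutoff $\rho$ enters because $(1-\rho)K_\nu$ is Schwartz and already lies in $\mathcal{D}(\bar\pa)$. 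You instead bypass the global Bessel solutions entirely and work with the indicial roots of $L_k$ at $r=0$, using the b-Sobolev characterization \eqref{closure} to decide which powers escape $\mathcal{D}(\bar\pa)$; this is closer in spirit to the Gil--Mendoza framework the paper cites for \eqref{closure}, and it avoids any special-function input. The trade-off is exactly the one you flag: the paper's explicit solutions automatically handle the uniformity in $k$ (there are only finitely many modes in the deficiency space, period), whereas your argument has to confront the infinite sum when showing that $u$ minus the four-term combination lands in $r^2 H^2_b$. A clean way to close that gap, which you nearly state, is to invoke the deficiency index $(2,2)$ directly: once you exhibit four elements of $\mathcal{D}(\bar\pa^*)$ that are linearly independent modulo $\mathcal{D}(\bar\pa)$, dimension counting finishes the proof without any mode-uniform estimate.
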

\begin{proof}
Following \eqref{deficiency}, we only need to characterize the deficiency subspaces. Consider the equations 
\begin{equation}
    (\pa+\beta_{\pm}^2)u=0
\end{equation}
where $\beta_{\pm}=e^{\mp i\pi/4}$. In polar coordinates, the equation becomes
\begin{equation}
    \left(-\partial_r^2-\frac{1}{r}\partial_r+\frac{1}{r^2}\left(D_{\theta}+\alpha\right)^2+\beta^2\right)u=0
\end{equation}
Standard separation and change of variables reduce this equation to a Bessel equation, and its solution is a linear combination of modified Bessel functions:
\begin{equation}
    u=\sum_{k\in\mathbb{Z}}\left(C_{1,k}I_{|k+\alpha|}(\beta r)+C_{2,k}K_{|k+\alpha|}(\beta r)\right)e^{ik\theta}.
\end{equation}
From \cite[10.25.3; 10.30.4]{NIST:DLMF}, $I_{|k+\alpha|}(\beta r)$ fails to be in $L^2(\RR_+,rdr)$ for all $k\in\mathbb{Z}$ due to the asymptotic behavior at $r\rightarrow\infty$, so there are only $K_{|k+\alpha|}(\beta r)$ terms that can be in the solutions. On the other hand, since $\alpha\in(0,1)$, the asymptotic behavior at $r\rightarrow0$ \cite[10.30.2]{NIST:DLMF} determines that $K_{|k+\alpha|}(\beta r)\in L^2(\RR_+,rdr)$ only for $k=-1,0$. Therefore, the deficiency subspaces are 
\begin{equation}
    \mathcal{K}_-\oplus\mathcal{K_+}=\text{span}_{\mathbb{C}}\{K_{\alpha}(\beta_{\pm} r),K_{1-\alpha}(\beta_{\pm} r)e^{-i\theta}\}.
\end{equation}
Let $\rho\in\CI_c(\RR_+)$ as in the statement of the lemma, and consider that both
\begin{equation*}
    [1-\rho(r)]K_{\alpha}(\beta_{\pm} r)\text{ and } [1-\rho(r)]K_{1-\alpha}(\beta_{\pm} r)
\end{equation*}
are Schwartz and vanish at $r=0$. Therefore, they are both in the space $\mathcal{D}(\bar{\pa})$. The domain of $\bar{\pa}^*$ thus can be written as
\begin{equation}
    \mathcal{D}(\bar{\pa}^*)=\mathcal{D}(\bar{\pa})\oplus \text{span}_{\mathbb{C}}\{\rho K_{\alpha}(\beta_{\pm} r),\rho K_{1-\alpha}(\beta_{\pm} r)e^{-i\theta}\}.
\end{equation}
The lemma then follows directly from the asymptotic expansions of the modified Bessel functions\cite[10.25.2; 10.27.4]{NIST:DLMF} when $r\rightarrow0$. This is because the only terms that are not in $\mathcal{D}(\bar{\pa})$ are the first two terms in their asymptotic expansions by the characterization \eqref{closure}, which exactly gives the terms with orders being $\pm\alpha$ and $\pm(1-\alpha)$ in $r$ for the corresponding mode. 
\end{proof}

Among all the self-adjoint extensions, in particular, the Friedrichs extension is the unique self-adjoint extension whose domain is contained in the form domain. For simplicity we let $P_{\alpha}$ denote the Friedrichs extension of the Aharonov--Bohm Hamiltonian henceforth. We define $\mathcal{D}_{s,\alpha}\df\mathcal{D}(\pa^{s/2})$. In particular, $\mathcal{D}_{2,\alpha}$ is the Friedrichs domain. We thus have the following lemma regarding the Friedrichs extension of the Aharonov--Bohm Hamiltonian:

\begin{lemma}
\label{Friedrichs}
For any $u\in\mathcal{D}_{2,\alpha}$, there exist constants $c_{-1}$ and $c_0$ in $\mathbb{C}$ and $v\in\mathcal{D}(\bP_{\alpha})$ such that
\begin{equation}
\label{F-expansion}
    u=\left(c_{-1}r^{1-\alpha}e^{-i\theta}+c_0r^{\alpha}\right)\rho(r)+v.
\end{equation}
\end{lemma}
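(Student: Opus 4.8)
The plan is to identify, among the four functions spanning $\mathcal{D}(\bar{\pa}^*)/\mathcal{D}(\bar{\pa})$ from Lemma \ref{adjoint domain}, exactly which combinations lie in the form domain $\mathcal{D}(\pa^{1/2})$, and then to observe that the Friedrichs domain $\mathcal{D}_{2,\alpha}$ is the intersection of $\mathcal{D}(\bar{\pa}^*)$ with (a space contained in) the form domain. Since $\mathcal{D}(\bar{\pa})$ is already inside the form domain, the question reduces to a finite-dimensional one: for each of the four basis elements $r^{\alpha}\rho$, $r^{-\alpha}\rho$, $r^{1-\alpha}e^{-i\theta}\rho$, $r^{\alpha-1}e^{-i\theta}\rho$, decide whether $\|\pa^{1/2}(\cdot)\|_{L^2}<\infty$, i.e.\ whether the distributional quadratic form $q(u,u)=(\pa u,u)$ is finite. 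Working mode by mode (the $k=0$ mode for the first two, the $k=-1$ mode for the last two), the relevant one-dimensional operator after the usual conjugation is the Bessel-type operator $-\partial_r^2 - \tfrac1r\partial_r + \tfrac{\nu^2}{r^2}$ with $\nu=\alpha$ or $\nu=1-\alpha$; the form is $\int_0^\infty\big(|\partial_r f|^2 + \tfrac{\nu^2}{r^2}|f|^2\big)\,r\,dr$. For $f\sim r^{\nu}$ near $0$ this integral converges (the integrand behaves like $r^{2\nu-1}$, integrable since $\nu>0$), while for $f\sim r^{-\nu}$ it diverges logarithmically or worse. Since $\alpha\in(0,1)$, among the four elements precisely $r^{\alpha}\rho$ and $r^{1-\alpha}e^{-i\theta}\rho$ have finite form norm, while $r^{-\alpha}\rho$ and $r^{\alpha-1}e^{-i\theta}\rho$ do not.

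Concretely, the steps are: (1) Recall from the general theory (von Neumann / Friedrichs) that $\mathcal{D}_{2,\alpha}=\mathcal{D}(\pa^{\mathrm{Fr}})=\mathcal{D}(\bar{\pa}^*)\cap\mathcal{D}(\pa^{1/2})$, so by Lemma \ref{adjoint domain} every $u\in\mathcal{D}_{2,\alpha}$ is of the form $v+c_1 r^{\alpha}\rho + c_2 r^{-\alpha}\rho + c_3 r^{1-\alpha}e^{-i\theta}\rho + c_4 r^{\alpha-1}e^{-i\theta}\rho$ with $v\in\mathcal{D}(\bar{\pa})$, and it remains only to show $c_2=c_4=0$. (2) Compute $q(u,u)$, using that $v$ contributes a finite amount and that the cross terms between $v$ and the singular pieces are controlled by integration by parts (the boundary terms at $r=0$ vanish because $v\in r^2 H^2_b$ decays fast enough). (3) By orthogonality of the angular modes $e^{0\cdot i\theta}$ and $e^{-i\theta}$, the $k=0$ and $k=-1$ contributions to the form decouple, so it suffices to examine each mode separately. (4) In each mode, reduce to the radial form $\int_0^\infty(|f'|^2+\nu^2 r^{-2}|f|^2)r\,dr$ and show that the presence of the $r^{-\nu}$ term (from $c_2$ or $c_4$) forces the form to be infinite — e.g.\ $|f'|^2 r \sim c^2\nu^2 r^{-2\nu-1}$ which is non-integrable at $0$ for $\nu>0$, and this divergent term cannot be cancelled by the $r^\nu$ term or by $v$ since those are finite. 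Hence $c_2=c_4=0$, and renaming $c_3=c_{-1}$, $c_1=c_0$ gives \eqref{F-expansion}.

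The main obstacle — or at least the only point requiring care — is step (2)/(4): making rigorous the claim that a finite-dimensional ``singular tail'' with an $r^{-\nu}$ component cannot have finite form norm even after adding an arbitrary $v\in\mathcal{D}(\bar{\pa})$. One must rule out cancellation, which is where one uses that $\mathcal{D}(\bar{\pa})=r^2 H^2_b$ consists of functions that are genuinely better-behaved at $0$ (in particular their radial profiles are $o(r^{-\nu})$ with $o(r^{-\nu-1})$ derivatives in an $L^2(r\,dr)$-averaged sense), so the leading $r^{-2\nu-1}$ singularity of $|u'|^2 r$ survives. A clean way to organize this is: if the form were finite, then $u\in\mathcal{D}(\pa^{1/2})\subset H^1_{\mathrm{loc}}$-type space adapted to the magnetic potential, and a Hardy-type inequality (or direct ODE analysis of the indicial roots $\pm\nu$, $\pm(1-\alpha)$) shows the $r^{-\nu}$ indicial solution is excluded from the form domain precisely when $\nu>0$. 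Alternatively, one can cite the standard fact for the inverse-square potential / Bessel operator on the half-line (as in the references \cite{derezinski2011homogeneous}\cite{derezinski2018radial}) that the Friedrichs extension selects the $r^{+\nu}$ branch. Everything else is bookkeeping: checking convergence of elementary integrals and tracking that the four-dimensional space from Lemma \ref{adjoint domain} splits as (good) $\oplus$ (bad) exactly as claimed.
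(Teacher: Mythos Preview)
Your proposal is correct and shares the same overall structure as the paper's proof: both start from $\mathcal{D}_{2,\alpha}=\mathcal{D}(\bar{\pa}^*)\cap\mathcal{D}(\pa^{1/2})$ and then use Lemma~\ref{adjoint domain} to reduce to deciding which of the four singular elements lie in the form domain. The difference is in how that decision is made. The paper does not compute the quadratic form directly; instead it invokes \cite[Lemma~8.1; Lemma~8.2]{gil2003adjoints} to identify the form domain as $\mathcal{D}(\bar{\pa}^*)\cap r^{1-0}H^2_b(\RRR)$, after which one simply reads off that $r^{\alpha}\rho$ and $r^{1-\alpha}e^{-i\theta}\rho$ belong to $r^{1-0}H^2_b$ while $r^{-\alpha}\rho$ and $r^{\alpha-1}e^{-i\theta}\rho$ do not. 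Your route is more elementary and self-contained: you compute $q(u,u)$ mode by mode and check integrability of $r^{\pm2\nu-1}$ near the origin, with the Hardy-type argument (or the $\mathcal{D}(\bar{\pa})=r^2H^2_b$ characterization) ruling out cancellation by $v$. Your approach avoids the external Gil--Mendoza machinery at the cost of a bit more computation; the paper's approach is terser but relies on that reference for the weighted-Sobolev description of the form domain.
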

\begin{proof}
The Friedrichs domain $\dom_{2,\alpha}$ is characterized as the subspace of $\mathcal{D}(\bar{\pa}^*)$ which is contained in the form domain $\dom(\pa^{1/2})$, i.e. any $u\in\dom_{2,\alpha}$ is  finite under the norm 
$$\|u\|_{L^2}+\|\pa^{1/2}u\|_{L^2}.$$
\cite[Lemma 8.1; Lemma 8.2]{gil2003adjoints} thus characterize this domain as 
$$\dom_{2,\alpha}=\mathcal{D}(\bar{\pa}^*)\cap r^{1-0}H^2_b(\RRR)$$
where $r^{1-0}H^2_b\df \bigcup_{\epsilon>0}r^{1-\epsilon}H^2_b$. Based on Lemma \ref{adjoint domain}, we thus conclude that any $u\in\dom_{2,\alpha}$ must take the form in \eqref{F-expansion}. 
\end{proof}

\begin{remark}
For later reference, when $\alpha\in(0,1)$, we also consider the domain $\dom_{2,-\alpha}$:
$$u\in\dom_{2,-\alpha}\Longleftrightarrow u=\left(c_{1}r^{1-\alpha}e^{i\theta}+c_0r^{\alpha}\right)\rho(r)+v,$$
where $v\in\mathcal{D}(\bP_{-\alpha})=r^2H^2_b(\RRR)$.
\end{remark}

\begin{remark}
For general non-Friedrichs self-adjoint extensions, we can obtain a two dimensional family of boundary asymptotics near the solenoid as $r\rightarrow0$:
\begin{equation*}
    u=\left(c_{1}^+r^{1-\alpha}e^{-i\theta}+c_0^+r^{\alpha}+c_{1}^-r^{\alpha-1}e^{-i\theta}+c_0^-r^{-\alpha}\right)\rho(r)+v
\end{equation*}
using the theory of the deficiency subspaces. In particular, these kinds of asymptotic behaviors can generate finitely many scattering poles/resonances due to a mode-mixing effect. This can be observed by examining the poles of the scattering matrix or the resolvent obtained in \cite{adami1998aharonov} \cite{stovicek1998aharonov}.
\end{remark}

We define $L_0, L_{-1}\in\mathcal{D}_{-2,\alpha}$ which map $u$ in the Lemma \ref{Friedrichs} to its corresponding coefficients $c_{-1}$ and $c_{0}$, where $\mathcal{D}_{-2,\alpha}$ is defined to be the dual space of $\dom_{2,\alpha}$ corresponding to the complex sesquilinear product $\la\cdot,\cdot\ra$, i.e., $\dom_{-2,-\alpha}$ is the dual space corresponding to the real bilinear product. The definitions of $L_0$ and $L_{-1}$ can be realized using the angular projection as follows: 
\begin{equation}
    \la L_0,u\ra\df\frac{1}{\sqrt{2\pi}}\lim_{r\downarrow0}\frac{1}{r^{\alpha}}\left[\Pi_0 u\right](r)\ 
    \text{  and  }\ 
    \la L_{-1},u\ra\df\frac{1}{\sqrt{2\pi}}\lim_{r\downarrow0}\frac{1}{r^{1-\alpha}}\left[\Pi_{-1}u\right](r)
\end{equation}
where 
$$\left[\Pi_ju\right](r)=\frac{1}{\sqrt{2\pi}}\int_0^{2\pi}u(r,\theta)e^{-ij\theta}d\theta$$
for $j=0,-1$.
We remark here that $L_0$ and $L_{-1}$ do not depend on choice of $\rho$ and they are supported at $0$. Here, we define $L\in\mathcal{D}_{-2,\alpha}$ to be \emph{supported at 0} if for any $u\in\dom_{2,\alpha}$, the pairing $\langle L, \varphi u\rangle$ vanishes for all $\varphi\in\CI_c(\RR^2)$ supported away from zero. Thus we have the following lemma regarding $L_j$ for $j=0,-1$: 
\begin{lemma}
\label{distribution_L}
Suppose $L\in\mathcal{D}_{-2,\alpha}$ and is supported only at $0$. Then $$L\in\text{span}\{L_{0},L_{-1}\}.$$ 
\end{lemma}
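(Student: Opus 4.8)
The plan is to show that any $L \in \mathcal{D}_{-2,\alpha}$ supported at $0$ is determined by at most two scalars, namely the two coefficients appearing in the Friedrichs boundary asymptotics of Lemma \ref{Friedrichs}, and that these two scalars are precisely $\langle L_0, \cdot\rangle$ and $\langle L_{-1}, \cdot\rangle$. First I would fix $u \in \dom_{2,\alpha}$ and write, using Lemma \ref{Friedrichs}, $u = (c_{-1} r^{1-\alpha} e^{-i\theta} + c_0 r^\alpha)\rho(r) + v$ with $v \in \mathcal{D}(\bar{\pa})$. The key point is that $\mathcal{D}(\bar{\pa})$, being the domain of the minimal (closed) operator, is the closure of $\CI_c(\RRR)$ in the graph norm; thus I would first argue that $\langle L, v\rangle = 0$ for every such $v$. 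This uses the hypothesis that $L$ is supported at $0$: elements of $\CI_c(\RRR)$ are supported away from $0$, so $\langle L, \psi\rangle = 0$ for $\psi \in \CI_c(\RRR)$, and then I extend this to the graph-norm closure by continuity of $L$ as an element of the dual $\mathcal{D}_{-2,\alpha}$ (noting $\mathcal{D}(\bar{\pa}) \subset \dom_{2,\alpha}$ continuously, since the graph norm of $\bar{\pa}$ dominates the form norm). The slightly delicate part here is matching the topologies: one must check that the restriction of the $\dom_{2,\alpha}$-pairing to $\mathcal{D}(\bar{\pa})$ is continuous for the graph norm, which follows from $\|w\|_{L^2} + \|\pa^{1/2} w\|_{L^2} \le \|w\|_{L^2} + \|\pa w\|_{L^2}^{1/2}\|w\|_{L^2}^{1/2} \lesssim \|w\|_{\pa}$ by interpolation/Cauchy--Schwarz.

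Granting $\langle L, v\rangle = 0$, we get $\langle L, u\rangle = c_{-1}\langle L, r^{1-\alpha}e^{-i\theta}\rho\rangle + c_0 \langle L, r^\alpha \rho\rangle$, so $L$ acts on $u$ only through the pair $(c_{-1}, c_0)$; call the two constants $\ell_1 \df \langle L, r^{1-\alpha}e^{-i\theta}\rho\rangle$ and $\ell_0 \df \langle L, r^\alpha\rho\rangle$. Next I would identify $(c_{-1}, c_0)$ with $(\langle L_{-1}, u\rangle, \langle L_0, u\rangle)$: applying the angular projection $\Pi_{-1}$ to $u$ kills the $r^\alpha\rho$ and (generically) the $v$-contribution to leading order, leaving $[\Pi_{-1}u](r) = \sqrt{2\pi}\, c_{-1} r^{1-\alpha}\rho(r) + o(r^{1-\alpha})$ as $r \downarrow 0$, since $v \in r^2 H_b^2$ contributes a term that is $o(r^{1-\alpha})$ (here $\alpha \in (0,1)$ so $r^2$-decay beats $r^{1-\alpha}$); dividing by $r^{1-\alpha}$ and taking $r \downarrow 0$ recovers $c_{-1} = \langle L_{-1}, u\rangle$ by the definition of $L_{-1}$. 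The same argument with $\Pi_0$ gives $c_0 = \langle L_0, u\rangle$. Therefore $\langle L, u\rangle = \ell_1 \langle L_{-1}, u\rangle + \ell_0 \langle L_0, u\rangle$ for all $u \in \dom_{2,\alpha}$, which is exactly the statement $L = \ell_1 L_{-1} + \ell_0 L_0 \in \operatorname{span}\{L_0, L_{-1}\}$.

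The main obstacle I anticipate is the second step: making rigorous the claim that the $v \in \mathcal{D}(\bar{\pa})$ term does not interfere with the extraction of the coefficients, i.e. that $\Pi_j v = o(r^{|j+\alpha| \wedge (1-|j+\alpha|)})$ with the relevant exponent, uniformly enough to survive division and the limit. This is really a statement that the b-Sobolev regularity $\mathcal{D}(\bar{\pa}) = r^2 H_b^2(\RRR)$ from \eqref{closure} forces each angular mode of $v$ to vanish faster at $r = 0$ than the borderline rates $r^\alpha$ and $r^{1-\alpha}$; one gets this from a one-dimensional Sobolev embedding on the half-line applied mode-by-mode to $r^{-2}v \in H_b^2$, together with the gap $\min(\alpha, 1-\alpha) < 1 < 2$. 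A secondary subtlety is ensuring the definitions of $L_0, L_{-1}$ via the $\rho$-cutoff asymptotics are genuinely independent of $\rho$ and land in $\mathcal{D}_{-2,\alpha}$ (both already asserted in the paragraph preceding the lemma), so that the final linear combination is a legitimate identity in $\mathcal{D}_{-2,\alpha}$ rather than merely a pointwise-in-$u$ relation — but since the identity holds for every $u$ in the domain, this is automatic.
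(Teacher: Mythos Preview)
Your proposal is correct and follows essentially the same route as the paper's proof: decompose $u$ via Lemma~\ref{Friedrichs}, kill the $v$-term using that $L$ is supported at $0$ and $\mathcal{D}(\bar P_\alpha)$ is the graph closure of $\CI_c(\RRR)$, and read off the two remaining coefficients as $\langle L_0,u\rangle$ and $\langle L_{-1},u\rangle$. The extra work you do---the continuity estimate matching the graph norm to the form norm, and the mode-by-mode verification that $v\in r^2H_b^2$ does not contaminate the limits defining $L_0,L_{-1}$---is simply filling in details the paper takes for granted (the latter is already implicit in the paragraph preceding the lemma, where it is asserted that the $L_j$ are well-defined and independent of $\rho$).
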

\begin{proof}
Take $u\in \mathcal{D}_{2,\alpha}$ with $\la L_j,u\ra=c_j$ for $j=0,-1$. Using the equation \eqref{F-expansion} which characterizes the Friedrichs extension, 
$$\la L,u\ra=\la L,v\ra+c_0\la L,r^{\alpha}\rho\ra+c_{-1}\la L,r^{1-\alpha}e^{-i\theta}\rho\ra$$
by Lemma \ref{Friedrichs}. In the above equation, $\la L,v\ra=0$ since $L$ is supported only at 0 and $v\in\dom(\bP_{\alpha})$ which is defined as the closure of $\CI_c(\RRR)$. Thus by $c_j=\la L_j,u\ra$,
\begin{equation}
    L=\la L,r^{\alpha}\rho\ra\cdot L_0+\la L,r^{1-\alpha}e^{-i\theta}\rho\ra\cdot L_{-1}
\end{equation}
as we claimed.
\end{proof}

\section{Differentiating the solenoid location}
If we fix two points $q_1,q_2\in\RR^2\backslash\{0\}$, and consider the geometric and diffractive fundamental solutions with respect to these two points; heuristically, moving the location of the solenoid will only change the diffractive wave without affecting the geometric wave except at the intersection of the two fronts. In other words, if we differentiate the wave propagator with respect to the location of the solenoid, we should have a purely diffractive wave. This idea was employed by Ford, Hassell, Hillairet in \cite{ford2018wave} to compute the structure of the wave propagator on the Euclidean surface with conic singularities. We shall make this technique mathematically rigorous for the Aharonov--Bohm propagator in this section.

We can always find a direction in which the translation of the solenoid does not pass through the line segment connecting $q_1$ and $q_2$. By the rotational symmetry, without loss of generality we can assume the translation is along $(-\infty,0]$ from the origin in the direction of the negative $x$-axis and $q_1=(r_1,\theta_1),q_2=(r_2,\theta_2)$ with $\theta_1,\theta_2\in(-\frac{\pi}{2},\frac{\pi}{2})$. This is equivalent to taking a branch cut at $(-\infty,0]$ of the complex plane and letting $q_1,q_2$ lie on the right side of the imaginary axis. Moving the solenoid along the negative $x$-axis corresponds to translating the points $q_1,q_2$ under the flow of $\varphi_X^s$ of a constant vector field $X=\partial_x$ in $\RR^2$ with the solenoid fixed. See Fig \ref{pic1} for a picture of translation by the vector field $X$.
\begin{remark}
\label{branchcut}
Here we only consider the angle $\theta_1,\theta_2\in (-\frac{\pi}{2},\frac{\pi}{2})\subset(-\pi,\pi)$ which can be achieved by the rotational symmetry. This also makes the angle function $\theta=\arctan(y/x)$ well-defined. Otherwise, the translated solenoid will be collinear with $(q_1,q_2)$ at some point $s=s_0$; the geometric wave will experience a phase shift there which corresponds to a changing of branch cut.
\end{remark}
Now we consider the translation. $(t,q_1,q_2)$ is translated by the flow of $X$ at time $s$ to 
$$\Phi^s_X(t,q_1,q_2)\df\left(t,\varphi_X^s(q_1),\varphi_X^s(q_2)\right)=(t,x_1+s,y_1,x_2+s,y_2).$$

Note that $P_{\alpha}$ is not translation invariant under $X$ due to the presence of the vector potential. Therefore, we instead consider the flow $\varphi_{T_x}^s$ under the twisted translation operator 
$$T_x=\partial_x-i\alpha\frac{y}{x^2+y^2}$$
of $X$, where $\frac{y}{x^2+y^2}=-\partial_x(\arctan(y/x))$ is induced by the vector potential $\Vec{A}$. 

\begin{figure}[H]
  \includegraphics[width=0.75\linewidth]{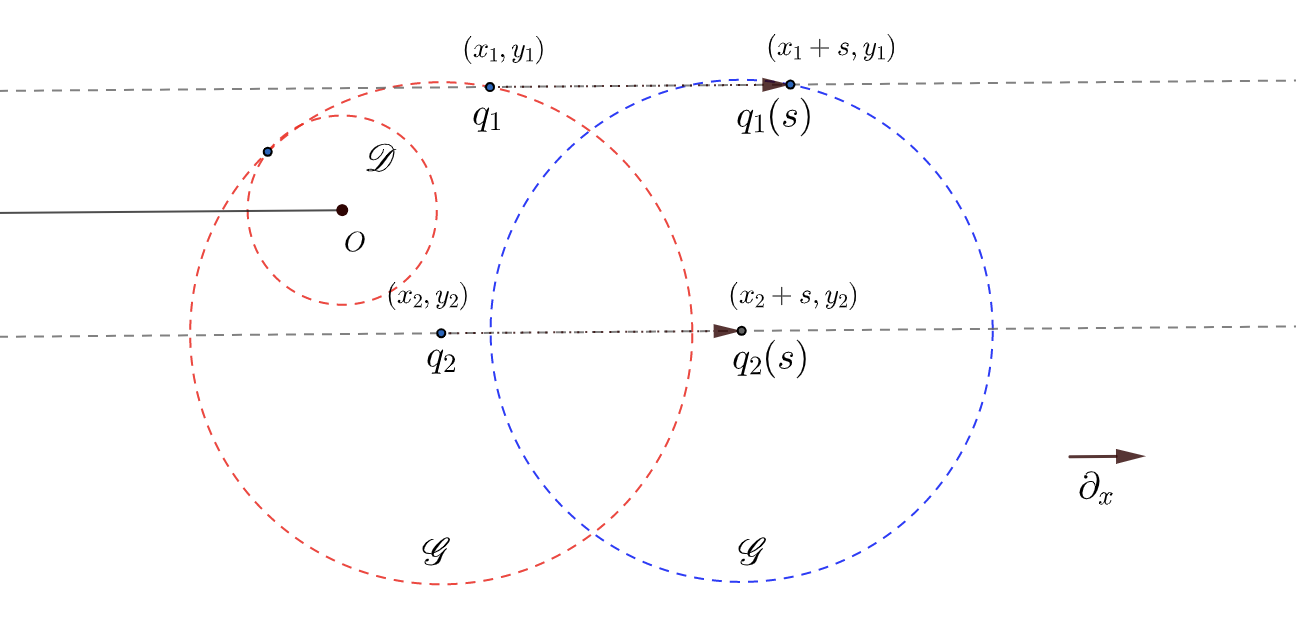}
  \caption{Moving solenoid technique}
  \medskip
  \small
  This is a translation of $q_1,q_2$ along the vector field $\partial_x$ by distance $s$. $O$ denotes the location of the solenoid; the red circles are the geometric ($\mathscr{G}$) and diffractive ($\mathscr{D}$) wavefront for $t=0$; the blue circle is the wavefront for $t=s$, which only has the geometric ($\mathscr{G}$) wavefront. 
  \label{pic1}
\end{figure}

\begin{remark}
The Aharonov--Bohm Hamiltonian has the following relation with the Laplacian on $\RRR$: 
\begin{equation}
\label{P_conjugation}
    \pa u(r,\theta)=e^{-i\alpha\theta}\Lap e^{i\alpha\theta}u(r,\theta),
\end{equation}
which could be treated using a $U(1)$-connection on the logarithmic covering of $\RRR$ and the usual Laplacian. In the following, we choose to use the operator $T_x$ directly to compute the diffractive wave of Aharonov--Bohm Hamiltonian $\pa$.
\end{remark}

We denote by $E(t,q_1,q_2)$ the fundamental solution to the wave equation, which is the Schwartz kernel of the wave propagator $W(t)$ defined in Section 2. First we consider the flow out induced by $T_x$ on functions 
$$ \left[(\Phi^s_{T_x})^*u\right](x,y)=u(x+s,y)e^{i\alpha\left(\arctan\left(\frac{y}{x+s}\right)-\arctan\left(\frac{y}{x}\right)\right)}. $$
Applying it to the fundamental solution gives
\begin{equation}
\label{trans_E}
    \left[(\Phi^s_{T_x})^*E\right](t,q_1,q_2)=E(t,x_1+s,y_1,x_2+s,y_2)e^{i\alpha\Delta\theta},
\end{equation}
where 
\begin{equation*}
\begin{split}
    \Delta\theta
    &=\Delta\theta_1-\Delta\theta_2\\
    &=\left(\arctan\left(\frac{y_1}{x_1+s}\right)-\arctan\left(\frac{y_1}{x_1}\right)\right)-\left(\arctan\left(\frac{y_2}{x_2+s}\right)-\arctan\left(\frac{y_2}{x_2}\right)\right)
\end{split}
\end{equation*}
is the total angle change of $q_1$ and $q_2$ by moving the solenoid under $\Phi_X^s$.

Now we consider the differentiation of the propagator with respect to the translation: 
\begin{equation}
\label{diff_trans_pgtr}
    \Upsilon_s\left(t,q_1,q_2\right)\df\partial_s\left[(\Phi^s_{T_x})^*E\right](t,q_1,q_2).
\end{equation}
When $s=0$, we calculate 
\begin{equation}
    \Upsilon_0\left(t,q_1,q_2\right)= X_1E(t,q_1,q_2)+X_2E(t,q_1,q_2)+E(t,q_1,q_2)\cdot\partial_s(i\alpha\Delta\theta)|_{s=0}
\end{equation}
where $X_j$ denotes $X$ acting on $q_j$ for $j=1,2$. Now we pair $\Upsilon_0\left(t,q_1,q_2\right)$ with a test function $\psi(q_2)$ to examine the differentiated propagator: 
\begin{equation*}
\begin{split}
    \la\Upsilon_0,\psi\ra_{q_2}&=\la X_1E,\psi\ra_{q_2}+\la X_2E,\psi\ra_{q_2}+\la E\cdot\partial_s(i\alpha\Delta\theta)|_{s=0},\psi\ra_{q_2}\\
                               &=\la X_1E,\psi\ra_{q_2}-\la E,X_2\psi\ra_{q_2}+\partial_s(i\alpha\Delta\theta_1)\la E,\psi\ra_{q_2}-\la E\cdot\partial_s(i\alpha\Delta\theta_2),\psi\ra_{q_2}\\
                               &=\left(X\circ W(t)\right)\psi-\left(W(t)\circ X\right)\psi-[i\alpha\frac{y}{x^2+y^2},W(t)]\psi\\
                               &=[T_x,W(t)]\psi.
\end{split}
\end{equation*}
The computation above then shows $\Upsilon_0(t,q_1,q_2)$ is the Schwartz kernel of the commutator $[T_x,W(t)]$. Using the operator identity
\begin{equation}
    \begin{split}
        \Box\circ \left[T_x,W(t)\right]&=\Box\circ T_x \circ W(t)-\Box\circ W(t)\circ T_x\\
        &=-\left[\pa,T_x\right]\circ W(t)+T_x\circ\Box\circ W(t)-\Box\circ W(t)\circ T_x\\
        &=\left[T_x,\pa\right]\circ W(t)
    \end{split}
\end{equation}
and Duhamel's principle, we have 
\begin{equation}
\label{Duhamel}
    \left[T_x,W(t)\right]= -\int_0^t W(t-s)\circ [T_x,\pa]\circ W(s)ds. 
\end{equation}
Thus, in order to understand $\Upsilon_0$, we need first to study the commutator $[T_x,\pa]$. 

Since $\dom_{2,\alpha}\subset H^1(\RR^2)$, we have $T_x: \dom_{2,\alpha}\rightarrow L^2(\RR^2)$. Thus for any $u\in\dom_{2,\alpha}$, applying the commutator gives that $[T_x,\pa]u\in\dom_{-2,\alpha}$. On the other hand, if $u$ is compactly supported in $\RR^2\backslash\{0\}$, then we have $[T_x,\pa]u=0$. Therefore, the distribution $[T_x,\pa]u$ for any $u\in\dom_{2,\alpha}$ can only be supported at $0$, and we can apply Lemma \ref{distribution_L} to show $[T_x,\pa]u$ is a linear combination of $L_0$ and $L_{-1}$. Also note that since 
\begin{equation}
\label{conjugation_Tx}
T_x=e^{-i\alpha\theta}\partial_x e^{i\alpha\theta},    
\end{equation}
combining with equation \eqref{P_conjugation}, the commutator satisfies
\begin{equation}
\label{commutator_TxP}
    [T_x,\pa]=e^{-i\alpha\theta}[\partial_x,\Lap] e^{i\alpha\theta}.
\end{equation}
Similarly for $T_y=\partial_y+i\alpha\frac{x}{x^2+y^2}$, where $i\alpha\frac{x}{x^2+y^2}=\partial_y(\arctan(y/x))$, we have the commutator 
\begin{equation}
\label{commutator_TyP}
    [T_y,\pa]=e^{-i\alpha\theta}[\partial_y,\Lap] e^{i\alpha\theta},
\end{equation}
and the discussion above relating to $T_x$ also applies to $T_y$ without any changes. Now we use these conjugation relations to compute the distribution $[Q,\pa]u$ for $u\in\dom_{2,\alpha}$ and $Q\in\text{span}_{\RR}\{T_x,T_y\}$. We use complex coordinates in the following proposition since it makes the proof more concise. 

\begin{proposition}
\label{distribution_QPu}
Let $Q\in\text{span}_{\RR}\{T_x,T_y\}$ be a twisted translation operator on $\RRR$. Then in complex coordinates 
\begin{equation}
\label{conjugation_Q}
  Q=e^{-i\alpha\theta}\left(X_z\partial_z+X_{\bz}\partial_{\bz}\right)e^{i\alpha\theta}  
\end{equation} 
for some $X_z,X_{\bz}\in\mathbb{C}$. For any $u\in\dom_{2,\alpha}$, the commutator 
\begin{equation}
\label{commutator_QP}
    [Q,\pa]u=-4\pi\alpha(1-\alpha)\cdot\left(X_{z}L_0(u)\cdot L_{-1}+X_{\bz}L_{-1}(u)\cdot L_0\right).
\end{equation}
\end{proposition}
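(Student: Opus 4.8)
The plan is to conjugate away the vector potential, reduce by $\mathbb{C}$--linearity to the two conormal model functions of Lemma~\ref{Friedrichs}, and then recognise the commutator as a Wronskian boundary term at the solenoid. First I would record \eqref{conjugation_Q}: writing $Q=aT_x+bT_y$ with $a,b\in\RR$, the identity \eqref{conjugation_Tx} and its analogue $T_y=e^{-i\alpha\theta}\partial_y e^{i\alpha\theta}$, together with $\partial_x=\partial_z+\partial_{\bz}$ and $\partial_y=i(\partial_z-\partial_{\bz})$, give $Q=e^{-i\alpha\theta}\bigl((a+ib)\partial_z+(a-ib)\partial_{\bz}\bigr)e^{i\alpha\theta}$, so $X_z=a+ib$ and $X_{\bz}=a-ib$. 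By Lemma~\ref{Friedrichs} every $u\in\dom_{2,\alpha}$ is $u=\bigl(c_0r^{\alpha}+c_{-1}r^{1-\alpha}e^{-i\theta}\bigr)\rho+v$ with $v\in\dom(\bP_\alpha)$ and $c_0=L_0(u)$, $c_{-1}=L_{-1}(u)$; since $v$ is a graph--norm limit of elements of $\CI_c(\RRR)$, on which $[Q,\pa]$ vanishes identically, continuity of $[Q,\pa]\colon\dom_{2,\alpha}\to\dom_{-2,\alpha}$ gives $[Q,\pa]v=0$. Hence it suffices to prove
\begin{equation*}
  [Q,\pa](r^{\alpha}\rho)=-4\pi\alpha(1-\alpha)\,X_z\,L_{-1},\qquad
  [Q,\pa]\bigl(r^{1-\alpha}e^{-i\theta}\rho\bigr)=-4\pi\alpha(1-\alpha)\,X_{\bz}\,L_0 .
\end{equation*}

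For the first, since $e^{i\alpha\theta}r^{\alpha}=z^{\alpha}$ and $\partial_{\bz}z^{\alpha}=0$, \eqref{conjugation_Q} gives $Q(r^{\alpha}\rho)=\alpha X_z\,r^{\alpha-1}e^{-i\theta}\rho$ modulo $\CI_c(\RRR)$ (the multivaluedness of $e^{\pm i\alpha\theta}$ is harmless: the output $r^{\alpha-1}e^{-i\theta}$ is single valued, so one may simply work on a punctured disc). A direct computation gives $\pa\bigl(r^{\gamma}e^{ik\theta}\bigr)=\bigl((k+\alpha)^2-\gamma^2\bigr)r^{\gamma-2}e^{ik\theta}$ away from the origin, so each of $r^{\alpha}$, $r^{-\alpha}$, $r^{1-\alpha}e^{-i\theta}$, $r^{\alpha-1}e^{-i\theta}$ is classically annihilated by $\pa$; in particular $\pa(r^{\alpha}\rho)=[\pa,\rho]r^{\alpha}\in\CI_c(\RRR)$, whence
\begin{equation*}
  [Q,\pa](r^{\alpha}\rho)=Q\,\pa(r^{\alpha}\rho)-\pa\,Q(r^{\alpha}\rho)
  \equiv-\alpha X_z\,\pa\bigl(r^{\alpha-1}e^{-i\theta}\rho\bigr)\pmod{\CI_c(\RRR)} .
\end{equation*}
As the left side is supported at $0$, and as $\pa$ preserves angular modes so the right side lies in the $k=-1$ mode, Lemma~\ref{distribution_L} forces $[Q,\pa](r^{\alpha}\rho)=-\alpha X_z\cdot\bigl(\text{the part of }\pa(r^{\alpha-1}e^{-i\theta}\rho)\text{ supported at }0\bigr)$, a multiple of $L_{-1}$. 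Thus everything reduces to the two identities $\pa\bigl(r^{\alpha-1}e^{-i\theta}\rho\bigr)=4\pi(1-\alpha)L_{-1}$ and, symmetrically (using $e^{i\alpha\theta}(r^{1-\alpha}e^{-i\theta})=\bz^{1-\alpha}$, $\partial_z\bz^{1-\alpha}=0$, so $Q(r^{1-\alpha}e^{-i\theta}\rho)\equiv(1-\alpha)X_{\bz}r^{-\alpha}\rho$), $\pa(r^{-\alpha}\rho)=4\pi\alpha L_0$, both in $\dom_{-2,\alpha}$.

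To prove these I would pair $\pa(r^{\alpha-1}e^{-i\theta}\rho)$ against the model functions spanning the boundary data (against $r^{1-\alpha}e^{i\theta}\rho$ and $r^{\alpha}\rho$ in $\dom_{2,-\alpha}$, using the bilinear pairing that identifies $\dom_{-2,\alpha}$ with the dual of $\dom_{2,-\alpha}$; equivalently one may pair sesquilinearly with $r^{1-\alpha}e^{-i\theta}\rho,\,r^{\alpha}\rho\in\dom_{2,\alpha}$). Here $\pa$ applied to the non--Friedrichs function $r^{\alpha-1}e^{-i\theta}\rho$ is read as the natural extension $L^2\to\dom_{-2,\alpha}$ of the operator; moving it across via Green's identity on $\{r>\epsilon\}$, the bulk integral vanishes (both functions solve $\pa\,\cdot=0$ near $0$) and only the circle $\{r=\epsilon\}$ contributes. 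The angular integration produces $2\pi$, and as $\epsilon\downarrow0$ the radial Wronskian of $r^{\alpha-1}$ against $r^{1-\alpha}$ weighted by $r$ tends to $-2(1-\alpha)$; after the sign that accompanies moving the operator across, one obtains the coefficient $4\pi(1-\alpha)$, while the pairing against $r^{\alpha}\rho$ vanishes by orthogonality of the modes $e^{-i\theta}$ and $1$. The same computation in the zero mode (Wronskian constant $-2\alpha$) gives $\pa(r^{-\alpha}\rho)=4\pi\alpha L_0$. Inserting these into the two reductions above, using $L_0(r^{\alpha}\rho)=1$ and $L_{-1}(r^{1-\alpha}e^{-i\theta}\rho)=1$, gives the two displayed identities, and $\mathbb{C}$--linearity then produces \eqref{commutator_QP}.

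The step I expect to be the main obstacle is the last one. One must be careful that $\pa(r^{\alpha-1}e^{-i\theta}\rho)$ and $\pa(r^{-\alpha}\rho)$ are interpreted as elements of the dual space $\dom_{-2,\alpha}$ and not as ordinary distributions on $\RR^2$: as honest distributions they both vanish, because the relevant homogeneity is non--integral, and the nonzero answer is precisely the Green's--formula boundary term that survives because the test functions in $\dom_{2,\alpha}$ (resp.\ $\dom_{2,-\alpha}$) carry the conormal asymptotics $r^{\alpha}$ and $r^{1-\alpha}e^{\mp i\theta}$. Pinning down the constant $4\pi$ and the overall sign requires tracking the $r\,dr$ weight, the orientation of $\{r=\epsilon\}$ as the boundary of $\{r>\epsilon\}$, the $1/\sqrt{2\pi}$ normalisations in the definitions of $L_0$ and $L_{-1}$, and the fact that $\pa$ is formally self--adjoint for the bilinear pairing only after the substitution $\alpha\mapsto-\alpha$ in one argument (which is exactly why $\dom_{2,-\alpha}$ enters).
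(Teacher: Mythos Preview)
Your proposal is correct and follows essentially the same strategy as the paper: reduce via Lemma~\ref{Friedrichs} to the two model functions $u_0=r^{\alpha}\rho$ and $u_{-1}=r^{1-\alpha}e^{-i\theta}\rho$, then extract the constant as a boundary term at $r=\epsilon$. The only difference is in the execution of that last step: the paper pairs $[Q,\pa]u_0$ directly against $u_{-1}$, conjugates to $\langle[\partial_z,\Lap]v_0,v_{-1}\rangle$ with $v_j=e^{i\alpha\theta}u_j$, and evaluates the boundary term via Stokes' theorem in $(z,\bz)$, whereas you first compute $Qu_0\equiv\alpha X_z\,r^{\alpha-1}e^{-i\theta}\rho$ and then evaluate $\pa$ of the non--Friedrichs function by a radial Wronskian. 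Both computations produce the same boundary contribution $-4\pi\alpha(1-\alpha)$; the paper's complex--coordinate version avoids the bookkeeping with $\dom_{2,\pm\alpha}$ and the bilinear/sesquilinear distinction that you (rightly) flag as the delicate point.
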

\begin{proof}
Since $\partial_z=\frac{1}{2}(\partial_x-i\partial_y)$ and $\partial_{\bz}=\frac{1}{2}(\partial_x+i\partial_y)$, together with the equation \eqref{conjugation_Tx} we conclude the equation \eqref{conjugation_Q}. 

Consider the sesquilinear pairing 
$$\la [Q,\pa]u,\tu\ra, \text{ for } u,\tu\in\dom_{2,\alpha}.$$
By the discussion above, the pair vanishes if either $u$ or $\tu$ is in $\dom(\bP_{\alpha})$. This is because that $[Q,\pa]$ is now self-adjoint and that $[Q,\pa]u$ is supported at $0$ for any $u\in\dom_{2,\alpha}$; pairing $[Q,\pa]u$ with any $\tu\in\dom(\bP_{\alpha})$ thus vanishes. It remains to consider only $u$ and $\tu$ to be the linear combination of 
$$u_0\df r^{\alpha}\rho(r) \text{ and } u_{-1}\df r^{1-\alpha}e^{-i\theta}\rho(r)$$ 
by Lemma \ref{Friedrichs}. A straightforward computation shows that 
$$\partial_z(e^{ik\theta})=\frac{k}{2r}e^{i(k-1)\theta}
\text{ and } 
\partial_{\bz}(e^{ik\theta})=-\frac{k}{2r}e^{i(k+1)\theta}$$
for any $k\in\RR$. Thus $$[Q,\pa]=e^{-i\alpha\theta}[X_z\partial_z+X_{\bz}\partial_{\bz},\Lap]e^{i\alpha\theta}$$
switches the modes correspondingly depending on the coefficients $X_z$ and $X_{\bz}$. Thus the only non-vanishing parts of the pairings are linear combinations of $\la e^{-i\alpha\theta}[\partial_{z},\Lap]e^{i\alpha\theta}u_0,u_{-1}\ra$ and $\la e^{-i\alpha\theta}[\partial_{\bz},\Lap]e^{i\alpha\theta}u_{-1},u_{0}\ra$. Thus we must have 
\begin{equation}
\label{commutator_QP2}
    [Q,\pa]u=C_1\cdot X_{z}L_0(u)\cdot L_{-1}+C_2\cdot X_{\bz}L_{-1}(u)\cdot L_0.
\end{equation} Now we compute the pairings to get $C_1$ and $C_2$. We only compute the first, the other pairing is the same. We define\footnote{Although these are only locally well-defined functions on $\RR^2$ individually, the pairing is well-defined due to the complex conjugation in the sesquilinear pairing.} $v_0:=e^{i\alpha\theta}u_0$ and $v_{-1}:=e^{i\alpha\theta}u_{-1}$, 
\begin{equation}
\label{pairing1}
    \begin{split}
        \la e^{-i\alpha\theta}[\partial_{z},\Lap]e^{i\alpha\theta}u_0,u_{-1}\ra
        &= \la [\partial_{z},\Lap]e^{i\alpha\theta}u_0,e^{i\alpha\theta}u_{-1}\ra\\
        &= \la [\partial_{z},\Lap]v_0,v_{-1}\ra\\
        &= -\la \Lap v_0,\partial_{\bz}v_{-1}\ra-\la \partial_{z} v_0,\Lap v_{-1}\ra\\
        &=-\int_{\RRR}\left(\Lap \bar{v}_0\cdot\partial_{\bz}v_{-1}+\partial_{\bz} \bar{v}_0\cdot\Lap v_{-1}\right)dxdy
    \end{split}
\end{equation}
Note that since $dxdy=-\frac{1}{2i}dzd\bz$ and  $\Lap=4\partial_z\partial_{\bz}$, we can apply Stokes' theorem to compute as following: 
\begin{equation}
    \begin{split}
        \eqref{pairing1}
        &=\frac{1}{2i}\lim_{\epsilon\rightarrow0}\int_{z\geq\epsilon}\left(\Lap \bar{v}_0\cdot\partial_{\bz}v_{-1}+\partial_{\bz} \bar{v}_0\cdot\Lap v_{-1}\right)dzd\bz\\
        &= \frac{2}{i}\lim_{\epsilon\rightarrow0}\int_{|z|=\epsilon}\left(\partial_{\bz} \bar{v}_0\cdot\partial_{\bz}v_{-1}\right)d\bz\\
        &=\frac{2}{i}\alpha(1-\alpha)\lim_{\epsilon\rightarrow0}\int_{|z|=\epsilon}\frac{1}{\bz}d\bz\\
        &=-4\pi \alpha(1-\alpha)
    \end{split}
\end{equation}
since $\bar{v}_0=\bz^{\alpha}\rho(|z|)$, $v_{-1}=\bz^{1-\alpha}\rho(|z|)$ and $\rho$ is equal to 1 for $|z|$ small enough. This leads to our conclusion \eqref{commutator_QP}.
\end{proof}

\begin{remark}
The idea of translating the solenoid only seems to apply to the Friedrichs extension. This is due to the fact that the Friedrichs extension corresponds to the most regular boundary condition as $r\rightarrow0$, i.e. $H^1(\RR^2)$, among all self-adjoint extensions. For general self-adjoint extensions, $[T_x,\pa]$ no longer maps the domain to its dual. In particular, failure to be in $L^2$ for other self-adjoint extensions under translation prevents us from carrying out the above computation of $\la[T_x,\pa]u,u'\ra$ for $u,u'\in\dom(\pa^{\emph{SA}})$, the domain of general self-adjoint extensions. 
\end{remark}

\section{The differentiated propagator}
We derive a formula for the differentiated Aharonov--Bohm propagator $\Upsilon_0(t,q_1,q_2)$ in this section. Note that since $T_x=e^{-i\alpha\theta}(\partial_z+\partial_{\bz})e^{i\alpha\theta}$, by combining equation \eqref{Duhamel} with Proposition \ref{distribution_QPu}, we conclude
\begin{equation}
\label{D_propagator}
\begin{split}
    \Upsilon_0(t,q_1,q_2)
    &= 4\pi \alpha(1-\alpha)\int_0^t W(t-s)\circ [L_0\circ W(s)\cdot L_{-1}+L_{-1}\circ W(s)\cdot L_0]ds\\
    &= 4\pi \alpha(1-\alpha)\int_0^t \Big\{[W(t-s) L_{-1}](q_1)\cdot [L_0\circ W(s)](q_2)\\
    &\ \ \ \ \ \ \ \ \ \ \ \ \ \ \ \ \ \ \ \ \ \ +[W(t-s) L_{0}](q_1)\cdot [L_{-1}\circ W(s)](q_2)\Big\} ds.
\end{split}
\end{equation}
We define 
$$l_j(t):=L_j\circ W(t)$$
for $j=-1,0$ by propagating (a test function) under the flow of $W(t)$ then applying $L_j, j=-1,0$. On the other hand, $W(t)L_j$ can be obtained from $l_j(t)$ through the adjoint of the wave propagator. Since $L_j$ is supported only at the solenoid, i.e., the origin, we should expect both $l_j(t)$ and $W(t)L_j$ to be like spherical waves emanating from the solenoid, i.e., purely diffractive waves. Indeed we have the following lemma: 
\begin{lemma}
\label{Propagated_distributions}
For $t>0$, the distributions $l_j(t),j=-1,0$ are given by: 
\begin{equation}
\label{l-1}
    l_{-1}(t)=\frac{1}{i2^{1-\alpha}\cdot\Gamma(2-\alpha)\sqrt{8\pi r}} \int_{\RR}e^{i\lambda(t-r)}e^{i(\frac{\pi}{2}(1-\alpha)+\frac{\pi}{4})}\lambda^{\frac{1}{2}-\alpha}\left(P(\lambda r)+iQ(\lambda r)\right)e^{i\theta_2}d\lambda,
\end{equation}
\begin{equation}
\label{l0}
    l_{0}(t)=\frac{1}{i2^{\alpha}\cdot\Gamma(\alpha+1)\sqrt{8\pi r}} \int_{\RR}e^{i\lambda(t-r)}e^{i(\frac{\pi}{2}\alpha+\frac{\pi}{4})}\lambda^{\alpha-\frac{1}{2}}(P(\lambda r)+iQ(\lambda r))d\lambda,
\end{equation}
where $P(\lambda r)$ and $Q(\lambda r)$ defined in the proof are polyhomogenous symbols of order $0$ with principal symbols equal to $1$. Thus they are polyhomogeneous conormal distributions with respect to $N^*\{t=r\}$.
\end{lemma}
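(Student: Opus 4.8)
The plan is to compute $l_j(t) = L_j \circ W(t)$ directly from the mode expansion of the wave propagator $W(t)$ given in Section 2, using the fact that $L_j$ extracts the coefficient of a particular boundary asymptotic as $r_1 \downarrow 0$. Recall from Section 2 that
$$
W(t)(q_1,q_2) = \sum_{k\in\mathbb{Z}}\int_0^\infty \frac{\sin(t\lambda)}{\lambda} J_{\nu_k}(\lambda r_1)J_{\nu_k}(\lambda r_2)\,\lambda\, e^{ik(\theta_1-\theta_2)}\,d\lambda,
$$
with $\nu_k = |k+\alpha|$. Applying $L_0$ in the $q_1$-variable projects onto the mode $k=0$ (since $L_0$ involves $\Pi_0$ and extracts the $r_1^\alpha$ coefficient), and applying $L_{-1}$ projects onto the mode $k=-1$ and extracts the $r_1^{1-\alpha}$ coefficient. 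The key analytic input is the small-argument expansion $J_\nu(z) = \frac{(z/2)^\nu}{\Gamma(\nu+1)}(1 + O(z^2))$ from the DLMF, so that $\lim_{r_1\downarrow 0} r_1^{-\nu_k} J_{\nu_k}(\lambda r_1) = \frac{(\lambda/2)^{\nu_k}}{\Gamma(\nu_k+1)}$; for $k=0$ this gives $\nu_0 = \alpha$ and for $k=-1$ this gives $\nu_{-1} = 1-\alpha$. Carrying out this limit inside the $\lambda$-integral (after justifying the interchange) produces, up to the stated constants $2^{-\alpha}\Gamma(\alpha+1)^{-1}$ etc. and the angular factors $e^{i\theta_2}$ (for $l_{-1}$, from the $e^{ik\theta_2}$ with $k=-1$... wait, $e^{-ik\theta_2}$ structure), an integral of the form
$$
l_j(t)(q_2) = c_j \int_0^\infty \sin(t\lambda)\, \lambda^{\pm\alpha \mp \tfrac12}\cdot\lambda^{1/2} J_{\nu}(\lambda r_2)\,(\text{angular})\,d\lambda.
$$

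Next I would rewrite the remaining factor $\lambda^{1/2} J_\nu(\lambda r_2)$ in terms of its behavior as $\lambda r_2 \to \infty$, which is where the conormal singularity at $t = r_2$ comes from. Using the standard asymptotic expansion of the Bessel function for large argument (DLMF 10.17), $J_\nu(z) \sim \sqrt{\frac{2}{\pi z}}\big(\cos(z - \tfrac{\nu\pi}{2} - \tfrac{\pi}{4})\, \tilde P(z) - \sin(z - \tfrac{\nu\pi}{2} - \tfrac{\pi}{4})\,\tilde Q(z)\big)$ with $\tilde P, \tilde Q$ classical symbols of order $0$ and $-1$ respectively, one converts $\sqrt{\lambda r_2}\,J_\nu(\lambda r_2)$ into a sum of oscillatory factors $e^{\pm i\lambda r_2}$ times symbols in $\lambda$. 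The phases $e^{i t\lambda}$ from $\sin(t\lambda)$ combine with the $e^{-i\lambda r_2}$ piece to give $e^{i\lambda(t-r_2)}$ — the diffractive phase — while the $e^{+i\lambda r_2}$ piece gives $e^{i\lambda(t+r_2)}$, which is smooth for $t > 0$ small (or can be absorbed once one extends the $\lambda$-integral to all of $\mathbb{R}$ by parity). Collecting the non-smooth contributions and defining $P(\lambda r_2)$, $Q(\lambda r_2)$ to be precisely the symbol factors $\tilde P$, $\tilde Q$ (normalized so their principal symbols are $1$), and bookkeeping the accumulated constant phases $e^{i(\frac{\pi}{2}\nu + \frac{\pi}{4})}$ and the powers of $2$ and $\Gamma$-factors, yields exactly the formulas \eqref{l-1} and \eqref{l0}. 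The conormality with respect to $N^*\{t=r\}$ is then immediate since the result is manifestly an oscillatory integral $\int_{\mathbb{R}} e^{i\lambda(t-r)} a(\lambda)\, d\lambda$ with $a$ a classical symbol.

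The main obstacle I anticipate is making the formal manipulations rigorous: both the interchange of $\lim_{r_1\downarrow 0}$ with $\int_0^\infty d\lambda$ and the passage from the convergent-but-not-absolutely-convergent Bessel integral to its oscillatory-integral/FIO form require care. The cleanest way around this is to work with the Hankel-transform / Cheeger–Taylor functional calculus: localize in $\lambda$ with a Littlewood–Paley decomposition, treat the low-frequency part (where everything converges absolutely and contributes only smooth terms for $t>0$ bounded away from $0$) separately from the high-frequency part, and on the high-frequency part substitute the uniform large-argument asymptotics of $J_\nu$ with explicit remainder bounds, integrating by parts in $\lambda$ to control errors. Applying $L_j$ amounts to evaluating a weighted limit of a mode component, and because $L_j$ is continuous on $\dom_{-2,\alpha}$ (Section 3), it suffices to verify the identity on a dense set of test functions and then pass to the limit. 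A secondary bookkeeping point is keeping track of the angular factor: $L_{-1}$ carries an $e^{-i\theta_1}$ in its definition but acts in the $q_1$ slot, so after pairing, the surviving $\theta_2$-dependence in $l_{-1}(t)$ is $e^{i\theta_2}$ as stated, while $l_0(t)$ is angularly trivial; I would double-check the sign conventions against \eqref{F-expansion} and the definition of $\Pi_j$ to pin down the phase constants exactly.
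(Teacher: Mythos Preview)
Your proposal is correct and follows essentially the same route as the paper: project onto the single mode selected by $L_j$, use the small-argument asymptotics $J_\nu(z)\sim (z/2)^\nu/\Gamma(\nu+1)$ to evaluate the $r_1\downarrow 0$ limit, then substitute the large-argument expansion of $J_\nu(\lambda r_2)$ and extend the $\lambda$-integral to all of $\RR$ by parity to obtain the conormal form. The paper carries this out somewhat formally (without the Littlewood--Paley localization you suggest for rigor), and your bookkeeping of the angular factor $e^{i\theta_2}$ for $l_{-1}$ is consistent with the paper's computation.
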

\begin{proof}
We only compute $l_{-1}(t)$ here; the computation of $l_0(t)$ is similar. First recall that $L_{-1}$ can be written in terms of the angular spectral projector as 
$$\la L_{-1},u\ra=\frac{1}{2\pi}\lim_{r\downarrow0}\frac{1}{r^{1-\alpha}}\int_0^{2\pi}u(r,\theta)e^{i\theta}d\theta.$$
By the functional calculus on cones \cite{cheeger1982diffraction}, the kernel of the wave propagator $W(t)$ takes the form
\begin{equation*}
    E(t,r_1,\theta_1,r_2,\theta_2)=\sum_{j\in\mathbb{Z}}e^{ij(\theta_1-\theta_2)}\int_0^{\infty}\frac{\sin(\lambda t)}{\lambda}J_{|j+\alpha|}(\lambda r_1)J_{|j+\alpha|}(\lambda r_2)\lambda d\lambda.
\end{equation*}
Since $L_{-1}$ only involves the $-1$ mode, applying $L_{-1}$ to the propagated distribution $W(t)u$, we get
\begin{equation*}
\begin{split}
    [l_{-1}(t)]u=\lim_{r_1\downarrow0}\frac{1}{2\pi r_1^{1-\alpha}}\int_{\theta_1=0}^{2\pi}&\int_{r_2=0}^{\infty}\int_{\theta_2=0}^{2\pi}\left(\int_0^{\infty}\frac{\sin(\lambda t)}{\lambda}J_{1-\alpha}(\lambda r_1)J_{1-\alpha}(\lambda r_2)\lambda d\lambda\right)\\
    & u(r_2,\theta_2)e^{i\theta_2}r_2dr_2d\theta_2d\theta_1.
\end{split}
\end{equation*}
Integrating in $\theta_1$ we have 
\begin{equation*}
\begin{split}
    [l_{-1}(t)]u=\lim_{r_1\downarrow0}\frac{1}{ r_1^{1-\alpha}}\int_{r_2=0}^{\infty}&\int_{\theta_2=0}^{2\pi}\left(\int_0^{\infty}\frac{\sin(\lambda t)}{\lambda}J_{1-\alpha}(\lambda r_1)J_{1-\alpha}(\lambda r_2)\lambda d\lambda\right)\\
    &u(r_2,\theta_2)e^{i\theta_2}r_2dr_2d\theta_2.
\end{split}
\end{equation*}
Thus the Schwartz kernel of $l_{-1}$ is
\begin{equation}
\begin{split}
    l_{-1}(t)=\lim_{r_1\downarrow0}\frac{1}{ r_1^{1-\alpha}}\left(\int_0^{\infty}\frac{\sin(\lambda t)}{\lambda}J_{1-\alpha}(\lambda r_1)J_{1-\alpha}(\lambda r_2)\lambda d\lambda\right)e^{i\theta_2}.
\end{split}
\end{equation}
Using the asymptotic behavior of $J_{1-\alpha}(\lambda r_1)$ for $r\rightarrow0$ \cite[10.7.3]{NIST:DLMF}:
\[J_{\nu}\left(z\right)\sim(\tfrac{1}{2}z)^{\nu}/\Gamma\left(\nu+1\right),\]
we obtain
\begin{equation*}
\begin{split}
    l_{-1}(t)
    &=\lim_{r_1\downarrow0}\frac{1}{ r_1^{1-\alpha}}\left(\int_0^{\infty}\frac{\sin(\lambda t)}{\lambda}\frac{(\frac{1}{2}\lambda r_1)^{1-\alpha}}{\Gamma(2-\alpha)}J_{1-\alpha}(\lambda r_2)\lambda d\lambda\right)e^{i\theta_2}\\
    &=\frac{1}{2^{1-\alpha}\cdot\Gamma(2-\alpha)}\left(\int_0^{\infty}\sin(\lambda t)\lambda^{1-\alpha}J_{1-\alpha}(\lambda r_2) d\lambda\right)e^{i\theta_2}.
\end{split}
\end{equation*}
Consider the asymptotic behavior of $J_{1-\alpha}(\lambda r_2)$ as $r\rightarrow\infty$ using \cite[10.17.3]{NIST:DLMF}:
\[J_{\nu}\left(z\right)\sim\left(\frac{2}{\pi z}\right)^{\frac{1}{2}}\*\left(\cos\omega\sum_{k=0}^{\infty}(-1)^{k}\frac{a_{2k}(\nu)}{z^{2k}}-\sin\omega\sum_{k=0}^{\infty}(-1)^{k}\frac{a_{2k+1}(\nu)}{z^{2k+1}}\right),\]
where $\omega=z-\tfrac{1}{2}\nu\pi-\tfrac{1}{4}\pi$, $a_0(\nu)=1$ and  
\[a_{k}(\nu)=\frac{(4\nu^{2}-1^{2})(4\nu^{2}-3^{2})\cdots(4\nu^{2}-(2k-1)^{2})}{k!8^{k}}.\]
Then for the distribution
\begin{equation*}
\begin{split}
    \tilde{l}_{-1}(t):=\int_0^{\infty}\sin(\lambda t)\lambda^{1-\alpha}J_{1-\alpha}(\lambda r) d\lambda,
\end{split}
\end{equation*}
let us consider its leading singularities for simplicity, which can be extracted from the leading part of the asymptotic expansion of the Bessel function $J_{1-\alpha}(\lambda r_2)$. Thus modulo lower order singularities, we have 
\begin{equation*}
\begin{split}
    \tilde{l}_{-1}(t)
    &\equiv\int_0^{\infty}\frac{e^{i\lambda t}-e^{-i\lambda t}}{2i}\lambda^{1-\alpha}\left(\frac{2}{\pi\lambda r}\right)^{\frac{1}{2}}\left(\frac{e^{i(\lambda r-\frac{\pi}{2}(1-\alpha)-\frac{\pi}{4})}+e^{-i(\lambda r-\frac{\pi}{2}(1-\alpha)-\frac{\pi}{4})}}{2}\right) d\lambda\\
    &= \frac{1}{i\sqrt{8\pi r}} \int_0^{\infty}(e^{i\lambda t}-e^{-i\lambda t})\lambda^{\frac{1}{2}-\alpha}\left(e^{i(\lambda r-\frac{\pi}{2}(1-\alpha)-\frac{\pi}{4})}+e^{-i(\lambda r-\frac{\pi}{2}(1-\alpha)-\frac{\pi}{4})}\right) d\lambda\\
    &= \frac{1}{i\sqrt{8\pi r}} \left(\int_{\RR}e^{i\lambda(t+r)}\lambda^{\frac{1}{2}-\alpha}e^{-i(\frac{\pi}{2}(1-\alpha)+\frac{\pi}{4})}d\lambda+\int_{\RR}e^{i\lambda(t-r)}\lambda^{\frac{1}{2}-\alpha}e^{i(\frac{\pi}{2}(1-\alpha)+\frac{\pi}{4})}d\lambda\right)
\end{split}
\end{equation*}
where the last equation is obtained by changing signs for two of the four total integrands. The same procedure can be applied to the total singularities of $\tilde{l}_{-1}(t)$. We define 
$$P(\lambda r)\df\sum_{k=0}^{\infty}(-1)^{k}\frac{a_{2k}(\nu)}{(\lambda r)^{2k}} \text{ and } Q(\lambda r)\df\sum_{k=0}^{\infty}(-1)^{k}\frac{a_{2k+1}(\nu)}{(\lambda r)^{2k+1}},$$
where $\nu=1-\alpha$. Note that $P$ is even in $\lambda$ and $Q$ is odd in $\lambda$, by the same changing variables trick:
\begin{equation*}
\begin{split}
    \tilde{l}_{-1}(t)
    &= \frac{1}{i\sqrt{8\pi r}} \int_{\RR}\left(e^{i\lambda(t+r)}e^{-i(\frac{\pi}{2}(1-\alpha)+\frac{\pi}{4})}+e^{i\lambda(t-r)}e^{i(\frac{\pi}{2}(1-\alpha)+\frac{\pi}{4})}\right)\lambda^{\frac{1}{2}-\alpha}P(\lambda r)d\lambda\\
    &\ +\frac{1}{\sqrt{8\pi r}} \int_{\RR}\left(e^{i\lambda(t+r)}e^{-i(\frac{\pi}{2}(1-\alpha)+\frac{\pi}{4})}+e^{i\lambda(t-r)}e^{i(\frac{\pi}{2}(1-\alpha)+\frac{\pi}{4})}\right)\lambda^{\frac{1}{2}-\alpha}Q(\lambda r)d\lambda.
\end{split}
\end{equation*}
Fot $t>0$, this is a polyhomogeneous conormal distribution at $N^*\{t=r\}$ with symbol in $S^{\frac{1}{2}-\alpha}_{phg}$. Thus, we conclude the distribution $l_{-1}(t)$ takes the form: 
\begin{equation*}
\begin{split}
    l_{-1}(t)
    =\frac{1}{i2^{1-\alpha}\cdot\Gamma(2-\alpha)\sqrt{8\pi r}} \int_{\RR}e^{i\lambda(t-r)}e^{i(\frac{\pi}{2}(1-\alpha)+\frac{\pi}{4})}\lambda^{\frac{1}{2}-\alpha}\left(P(\lambda r)+iQ(\lambda r)\right)e^{i\theta_2}d\lambda
\end{split}
\end{equation*}
Similarly, the $l_0(t)$ takes the form:
\begin{equation*}
\begin{split}
    l_{0}(t)
    =\frac{1}{i2^{\alpha}\cdot\Gamma(\alpha+1)\sqrt{8\pi r}} \int_{\RR}e^{i\lambda(t-r)}e^{i(\frac{\pi}{2}\alpha+\frac{\pi}{4})}\lambda^{\alpha-\frac{1}{2}}(P(\lambda r)+iQ(\lambda r))d\lambda.
\end{split}
\end{equation*}
\end{proof}

\begin{lemma}
\label{W_tL}
The distributions $W(t)L_{-1}$ and $W(t)L_{0}$ take the forms: 
\begin{equation}
\nonumber
    W(t)L_{-1}= \frac{1}{i2^{1-\alpha}\cdot\Gamma(2-\alpha)\sqrt{8\pi r}} \int_{\RR}e^{i\lambda(t-r)}e^{i(\frac{\pi}{2}(1-\alpha)+\frac{\pi}{4})}\lambda^{\frac{1}{2}-\alpha}\left(P(\lambda r)+iQ(\lambda r)\right)e^{-i\theta}d\lambda,
\end{equation}
\begin{equation}
\nonumber
    W(t)L_{0}=\frac{1}{i2^{\alpha}\cdot\Gamma(\alpha+1)\sqrt{8\pi r}} \int_{\RR}e^{i\lambda(t-r)}e^{i(\frac{\pi}{2}\alpha+\frac{\pi}{4})}\lambda^{\alpha-\frac{1}{2}}(P(\lambda r)+iQ(\lambda r))d\lambda.
\end{equation}
\end{lemma}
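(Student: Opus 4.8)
The plan is to deduce Lemma \ref{W_tL} from Lemma \ref{Propagated_distributions} together with the self-adjointness of $W(t)$, re-running the same mode-by-mode computation but now with $L_j$ attached to the \emph{source} variable of the wave kernel. First I would start from the conic functional-calculus expansion
\[
E_\alpha(t,q_1,q_2)=\sum_{j\in\mathbb{Z}}e^{ij(\theta_1-\theta_2)}\int_0^{\infty}\frac{\sin(\lambda t)}{\lambda}\,J_{|j+\alpha|}(\lambda r_1)\,J_{|j+\alpha|}(\lambda r_2)\,\lambda\,d\lambda,
\]
and note that since $W(t)$ is bounded on the relevant scale of spaces it dualizes, so $W(t)L_j$ is a well-defined distribution on $\RRR$. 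The key observation is that $L_j$ is supported at the origin and couples to a single angular mode: $L_{-1}$ reads off the $r^{1-\alpha}e^{-i\theta}$-coefficient of a Friedrichs element, so the Schwartz kernel of $L_{-1}$, as an element of the sesquilinear dual $\dom_{-2,\alpha}$, lies in the $e^{-i\theta}$ angular mode (the complex conjugate of the $e^{+i\theta}$ appearing in the readout $\tfrac1{2\pi}\lim_{r\downarrow0}r^{-(1-\alpha)}\int\cdot\,e^{i\theta}\,d\theta$), and $L_0$ lies in the trivial mode. Since $W(t)$ preserves angular modes, $W(t)L_{-1}$ again lies in the $e^{-i\theta}$ mode and $W(t)L_0$ carries no angular factor; equivalently, self-adjointness gives $\overline{E_\alpha(t,q_1,q_2)}=E_\alpha(t,q_2,q_1)$, so passing from the kernel of $l_j(t)$ to that of $W(t)L_j$ amounts to exchanging $q_1\leftrightarrow q_2$ and conjugating the angular exponential, while the radial factor — built only from the real functions $J_{|j+\alpha|}$ and $\sin(\lambda t)/\lambda$ — is untouched.

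Once the single surviving term of the expansion is isolated, the rest of the argument is a direct re-run of the second half of the proof of Lemma \ref{Propagated_distributions}. Applying the small-argument asymptotic $J_\nu(z)\sim(z/2)^{\nu}/\Gamma(\nu+1)$ in the $r_2\downarrow0$ limit collapses the double-Bessel integral to the single-Bessel integral $\tfrac1{2^{\nu_j}\Gamma(\nu_j+1)}\int_0^{\infty}\sin(\lambda t)\,\lambda^{\nu_j}J_{\nu_j}(\lambda r)\,d\lambda$ with $\nu_{-1}=1-\alpha$ and $\nu_0=\alpha$; the large-$\lambda$ Hankel asymptotic, with the same coefficients $a_k(\nu_j)$ and hence the same order-zero polyhomogeneous symbols $P(\lambda r)$ and $Q(\lambda r)$ of principal symbol $1$, then exhibits this, for $t>0$, as a polyhomogeneous conormal distribution at $N^*\{t=r\}$ with symbol $\lambda^{\frac12-\alpha}(P+iQ)$ (respectively $\lambda^{\alpha-\frac12}(P+iQ)$). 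Matching constants and phases term by term with Lemma \ref{Propagated_distributions} then produces exactly the stated formulas: the only change relative to $l_{-1}(t)$ is that the angular factor $e^{i\theta_2}$ is replaced by $e^{-i\theta}$, and $W(t)L_0=l_0(t)$ verbatim.

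I expect the main — and essentially the only — delicate point to be the bookkeeping of the complex conjugation built into the sesquilinear pairing that defines $\dom_{-2,\alpha}$, and hence into the dual action of $W(t)$ on $L_j$; this is precisely what produces the angular factor $e^{-i\theta}$ rather than $e^{i\theta}$. I would handle it by making the mode-preservation argument above explicit — a distribution supported at $0$ that pairs nontrivially only with the $e^{-i\theta}$-component of its argument is carried by $W(t)$ into the $e^{-i\theta}$ mode, and propagating that kernel against $E_\alpha=\sum_j e^{ij(\theta_1-\theta_2)}R_j$ selects precisely the $j=-1$ term whose radial factor involves $J_{1-\alpha}$ — and then cross-checking against $\overline{E_\alpha(t,q_1,q_2)}=E_\alpha(t,q_2,q_1)$. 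Everything else is the same Bessel-function analysis already carried out for Lemma \ref{Propagated_distributions}, so no new estimates are required.
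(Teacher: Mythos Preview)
Your proposal is correct and its core idea---using that $W(t)$ is Hermitian so that $\langle W(t)L_j,\varphi\rangle=\langle L_j,W(t)\varphi\rangle=l_j(t)\varphi$, with the sesquilinear pairing forcing $e^{i\theta}\mapsto e^{-i\theta}$---is exactly the paper's argument. The paper stops there, however: once this identity is established, $W(t)L_j$ \emph{is} the Schwartz kernel of $l_j(t)$ with the angular factor conjugated, so re-running the Bessel asymptotics in the source variable is redundant and can be dropped.
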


\begin{proof}
Again we only consider $W(t)L_{-1}$; the other follows from the same argument. 

Consider the pairing $\la W(t)L_{-1}, \varphi\ra$. Note that $W(t)$ is a Hermitian operator, hence
$$\la W(t)L_{-1}, \varphi\ra=\la L_{-1},W(t)\varphi\ra=(L_{-1}\circ W(t))\varphi=l_{-1}(t)\varphi.$$
$W(t)L_{-1}$ must agree with the Schwartz kernel of $l_{-1}(t)$, except with the term $e^{i\theta}$ switched to $e^{-i\theta}$ due to the bracket being sesquilinear product. 
\end{proof}

Now based on the equation \eqref{D_propagator} and Lemma \ref{Propagated_distributions}, Lemma \ref{W_tL}, we conclude that $\Upsilon_0(t,q_1,q_2)$ is a polyhomogeneous Lagrangian distribution associated to the diffractive Lagrangian relations $N^*\{t=r_1+r_2\}$.
\begin{proposition}
\label{D_full_propagator}
For $t>0$, the differentiated propagator $\Upsilon_0(t,q_1,q_2)$ is given by
\begin{equation}
    \Upsilon_0(t,q_1,q_2)= \frac{\sin\pi\alpha}{4\pi\sqrt{r_1r_2}}\left(\int_{\RR}e^{i\lambda(t-r_2-r_1)}\tilde{a}(r_1,r_2,\lambda)d\lambda \right)(e^{-i\theta_1}+e^{i\theta_2}),
\end{equation}
for some $\tilde{a}\in S^0_{phg}$ with principal symbol equal to $1$.
\end{proposition}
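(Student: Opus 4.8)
The plan is to substitute the four explicit propagated distributions supplied by Lemmas~\ref{Propagated_distributions} and~\ref{W_tL} into the Duhamel identity~\eqref{D_propagator} and to evaluate the intermediate-time integral $\int_0^t ds$ by recognising it as a convolution. Write $w\df t-r_1-r_2$. In the first summand of~\eqref{D_propagator} the factor $[W(t-s)L_{-1}](q_1)$ carries the angular exponential $e^{-i\theta_1}$ and $[L_0\circ W(s)](q_2)$ carries none; in the second summand $[W(t-s)L_0](q_1)$ carries none and $[L_{-1}\circ W(s)](q_2)$ carries $e^{i\theta_2}$. Hence the integrand of~\eqref{D_propagator} splits as a term proportional to $e^{-i\theta_1}$ plus a term proportional to $e^{i\theta_2}$, each a product of two one-variable oscillatory integrals depending on the intermediate time $s$.

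Next I would analyse, say, the first summand. Using~\eqref{l0} and Lemma~\ref{W_tL}, and absorbing the constant prefactors, its $s$-dependent part is $g_1\big((t-r_1)-s\big)\,g_2(s-r_2)$, where $g_1(\tau)=\int_\RR e^{i\lambda\tau}\,\lambda^{\frac12-\alpha}(P+iQ)(\lambda r_1)\,d\lambda$ and $g_2(\sigma)=\int_\RR e^{i\mu\sigma}\,\mu^{\alpha-\frac12}(P+iQ)(\mu r_2)\,d\mu$ are conormal distributions singular at $\tau=0$, resp.\ $\sigma=0$. Near the diffractive front the two singular points $s=t-r_1$ and $s=r_2$ of this integrand coincide precisely on $\{t=r_1+r_2\}$ and are otherwise distinct, and both lie in the open interval $(0,t)$ since $t-r_1=r_2>0$ and $r_2<t$ there; consequently replacing $\int_0^t$ by $\int_\RR$ alters the result only by a term smooth across $\{t=r_1+r_2\}$. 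The $s$-integral then becomes the convolution $(g_1*g_2)(w)$, and by the convolution theorem
\[
  (g_1*g_2)(w)=c\int_\RR e^{i\lambda w}\,\lambda^{\frac12-\alpha}\lambda^{\alpha-\frac12}\,(P+iQ)(\lambda r_1)\,(P+iQ)(\lambda r_2)\,d\lambda
\]
for a universal constant $c$. The crucial point is that the frequency powers cancel, $\lambda^{(\frac12-\alpha)+(\alpha-\frac12)}=\lambda^0$, so the amplitude $(P+iQ)(\lambda r_1)(P+iQ)(\lambda r_2)$ lies in $S^0_{phg}$ with principal symbol $1$; this says exactly that the first summand is polyhomogeneous conormal at $N^*\{t=r_1+r_2\}$. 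The second summand is identical with the roles of the two Bessel orders $\alpha$ and $1-\alpha$ exchanged between the $r_1$- and $r_2$-factors; its amplitude again has principal symbol $1$, so to the order relevant here the two amplitudes may be written as a common $\tilde a\in S^0_{phg}$ with $\sigma_0(\tilde a)=1$.

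Then I would assemble the constant. The product of the two normalising prefactors in Lemmas~\ref{Propagated_distributions}--\ref{W_tL} contains $e^{i(\frac\pi2(1-\alpha)+\frac\pi4)}e^{i(\frac\pi2\alpha+\frac\pi4)}=e^{i\pi}=-1$, together with $i\cdot i=-1$, $2^{1-\alpha}2^{\alpha}=2$, and $\sqrt{8\pi}\cdot\sqrt{8\pi}=8\pi$, leaving $\frac{1}{16\pi\,\Gamma(2-\alpha)\Gamma(\alpha+1)}$; Euler's reflection formula gives $\Gamma(2-\alpha)\Gamma(\alpha+1)=\alpha(1-\alpha)\Gamma(1-\alpha)\Gamma(\alpha)=\frac{\alpha(1-\alpha)\pi}{\sin\pi\alpha}$, so this prefactor equals $\frac{\sin\pi\alpha}{16\pi^2\alpha(1-\alpha)}$. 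Multiplying by the factor $4\pi\alpha(1-\alpha)$ from~\eqref{D_propagator} and the convolution constant $c$, and factoring out $(e^{-i\theta_1}+e^{i\theta_2})/\sqrt{r_1r_2}$, yields the asserted $\Upsilon_0(t,q_1,q_2)=\frac{\sin\pi\alpha}{4\pi\sqrt{r_1r_2}}\big(\int_\RR e^{i\lambda w}\tilde a(r_1,r_2,\lambda)\,d\lambda\big)(e^{-i\theta_1}+e^{i\theta_2})$ with $\tilde a\in S^0_{phg}$ and $\sigma_0(\tilde a)=1$.

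I expect the main obstacle to be the analytic bookkeeping behind ``$\int_0^t$ of a product of one-sided conormal distributions equals, modulo $C^\infty$, a convolution''. Concretely one must justify interchanging the $s$-integration with the only conditionally convergent frequency integrals; show that excising the low-frequency regions $|\lambda|\lesssim 1$ changes the answer only by a function smooth in all variables, which is harmless for a statement about the wavefront at $\{t=r_1+r_2\}$; and verify that the contributions from the endpoints $s=0$ and $s=t$ of $\int_0^t$ are smooth near the front --- this last point uses $r_1,r_2>0$, i.e.\ that $q_1,q_2$ lie off the solenoid, so that the two singular points of the integrand stay interior to $(0,t)$. By contrast the numerology that the orders $\tfrac12-\alpha$ and $\alpha-\tfrac12$ add to $0$, which is what makes $\Upsilon_0$ a bounded-order conormal distribution at all, is elementary once the reduction to a convolution is in place.
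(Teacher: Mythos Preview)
Your proposal is correct and follows the same route as the paper: start from the Duhamel identity \eqref{D_propagator}, substitute the four distributions from Lemmas~\ref{Propagated_distributions} and~\ref{W_tL}, observe the angular splitting into $e^{-i\theta_1}$ and $e^{i\theta_2}$ pieces, and then collapse the intermediate-time integral. The only cosmetic difference is that the paper phrases the last step as ``stationary phase in $(s,\xi)$'' whereas you phrase it as the convolution theorem; since the phase $\lambda(t-s-r_1)+\xi(s-r_2)$ is linear in $s$, the $s$-integration literally produces $2\pi\delta(\xi-\lambda)$ and the two formulations are identical. Your treatment is in fact more careful than the paper's on two points: you justify replacing $\int_0^t$ by $\int_\RR$ by noting that the singular supports $s=t-r_1$ and $s=r_2$ sit in the interior of $(0,t)$ near the diffractive front, and you make the constant computation explicit via $\Gamma(2-\alpha)\Gamma(1+\alpha)=\alpha(1-\alpha)\pi/\sin\pi\alpha$, which the paper suppresses. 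The one loose end in your write-up is the unspecified ``universal constant $c$'' from the convolution; pinning it down (it is $2\pi$, which you will see is absorbed into the normalisation of $\tilde a$) would complete the bookkeeping.
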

\begin{proof}
Following equation \eqref{D_propagator}, we have: 
\begin{equation*}
    \Upsilon_0(t,q_1,q_2)= 4\pi \alpha(1-\alpha)\int_0^t \Big\{[W(t-s)L_{-1}](q_1)\cdot [l_0(s)](q_2)+[W(t-s)L_0](q_1)\cdot [l_{-1}(s)](q_2)\Big\}ds.
\end{equation*}
To begin, we consider the first integrand: 
\begin{equation*}
    \Upsilon_0^{(1)}(t,q_1,q_2):=4\pi \alpha(1-\alpha)\int_0^t [W(t-s)L_{-1}](q_1)\cdot [l_0(s)](q_2)ds.
\end{equation*}
Similarly, we define the second integrand as $\Upsilon_0^{(2)}$. Substituting $W(t-s)L_{-1}$ and $l_0(s)$ with equations from Lemma \ref{Propagated_distributions} and Lemma \ref{W_tL}, and defining $a(\lambda r):=P(\lambda r)+iQ(\lambda r)$, it becomes
\begin{equation*}
    \frac{\sin\pi\alpha}{4\pi\sqrt{r_1r_2}}\int_0^t \left(\int_{\RR}\int_{\RR}e^{i(\lambda(t-s-r_1)+\xi(s-r_2))}\lambda^{\frac{1}{2}-\alpha}\xi^{\alpha-\frac{1}{2}}a(\lambda r_1)a(\xi r_2)d\lambda d\xi\right)e^{-i\theta_1}ds.
\end{equation*}
Applying the stationary phase lemma in $(s,\xi)$, we conclude that it is a polyhomogeneous conormal distribution at $N^*\{t=r_1+r_2\}$: 
\begin{equation}
    \Upsilon_0^{(1)}(t,q_1,q_2)=\frac{\sin\pi\alpha}{4\pi\sqrt{r_1r_2}} \left(\int_{\RR}e^{i\lambda(t-r_2-r_1)}\tilde{a}(r_1,r_2,\lambda)d\lambda \right)e^{-i\theta_1},
\end{equation}
where $\tilde{a}(r_1,r_2,\lambda)$ has certain asymptotic expansion, which is one-step in $\lambda$. Similarly, for the second integrand, the same process gives
\begin{equation}
\Upsilon_0^{(2)}(t,q_1,q_2)=\frac{\sin\pi\alpha}{4\pi\sqrt{r_1r_2}}\left(\int_{\RR}e^{i\lambda(t-r_2-r_1)}\tilde{a}(r_1,r_2,\lambda)d\lambda \right)e^{i\theta_2}.
\end{equation}
Thus, the differentiated propagator
\begin{equation*}
    \Upsilon_0(t,q_1,q_2)= \frac{\sin\pi\alpha}{4\pi\sqrt{r_1r_2}}\left(\int_{\RR}e^{i\lambda(t-r_2-r_1)}\tilde{a}(r_1,r_2,\lambda)d\lambda \right)(e^{-i\theta_1}+e^{i\theta_2}),
\end{equation*}
which is a polyhomogeneous conormal distribution with respect to $N^*\{t=r_1+r_2\}$.
\end{proof}

\section{The Aharonov--Bohm wave propagator}
In this section, we compute the diffractive wave propagator $E_{\text{D}}(t)$ of the Aharonov--Bohm Hamiltonian. For the fundamental solution $E(t,r_1,\theta_1,r_2,\theta_2)$ of the Aharonov--Bohm Hamiltonian, the standard propagation of singularities argument gives the singularities at the geometric wavefront which is a spherical wave of radius $t$ from the source; the geometric theory of diffraction \cite{keller1962geometrical} together with the finite speed of propagation suggest that there are other singularities emanating from the solenoid within the ball of radius $r_1=t-r_2$ for $t>r_2$, which are the diffractive singularities. In this section, we show the diffractive wavefront are conormal singularities with respect to
$\{t=r_1+r_2\}$ and compute its amplitude.

For $|\theta_1-\theta_2|\neq\pi$, fix a finite time $t$. By the moving solenoid technique, for $s\gg1$ large enough, the solenoid is far away from the pair of points $(q_1,q_2)$. Thus, the translated diffractive wave propagator $\big[(\Phi^s_{T_x})^*E_{\text{D}}\big](t)\equiv 0$ for $s\gg1$ by finite speed of propagation, i.e., the equation $$\big[(\Phi^s_{T_x})^*E_{\text{G}}\big](t,q_1,q_2)\equiv \big[(\Phi^s_{T_x})^*E\big](t,q_1,q_2)$$
holds modulo smooth errors. (See Figure \ref{pic1}.) By the translation invariance of solutions under $T_x$ away from the solenoid, for $S,S'\gg 1$,
\begin{equation}
\label{trans_equiv}
    \big[(\Phi^S_{T_x})^*E\big](t,q_1,q_2)= \big[(\Phi^{S'}_{T_x})^*E\big](t,q_1,q_2).
\end{equation}
Therefore, by fundamental theorem of calculus and equation \eqref{diff_trans_pgtr}, for $|\theta-\theta'|\neq \pi$ the diffractive propagator
\begin{equation}
    E_{\text{D}}(t,q_1,q_2)\equiv E(t,q_1,q_2)-\big[(\Phi^S_{T_x})^*E\big](t,q_1,q_2)=-\int^{S}_0\Upsilon_s(t,q_1,q_2) ds
\end{equation}
modulo smooth terms, since the second part in the middle only has geometric singularities, which cancels out the geometric singularities in the first term. By \eqref{trans_equiv}, the above equation is independent of $S$; letting $S\rightarrow\infty$ yields
\begin{equation}
\label{ftc}
    E_{\text{D}}(t)\equiv -\int^{\infty}_0\Upsilon_s(t,q_1,q_2) ds
\end{equation}
modulo smooth remainders.

\begin{remark}
If we include $|\theta_1-\theta_2|=\pi$ into consideration, then in the equation \eqref{ftc} the geometric singularities in $E(t)$ and $\big[(\Phi^S_{T_x})^*E\big](t)$ cancel each other out. The cancellation of the geometric waves is due to the assumptions we made at the beginning of Section 4 together with the remark \ref{branchcut}, since the phase of the geometric wave remains unchanged under the twisted translation with these assumptions. On the other hand, if the solenoid and $q_1,q_2$ happen to be collinear in the twisted translation, i.e. the aforementioned assumptions fail, the geometric wave will experience a phase shift and the geometric waves in $E(t)$ and $\big[(\Phi^S_{T_x})^*E\big](t)$ will not cancel each other out. Under this circumstance, when combining with the phase shifted geometric wave, the integral above is indeed an intersecting Lagrangian distribution introduced by Melrose and Uhlmann \cite{melrose1979lagrangian}. The intersecting Lagrangian structure near the geometric wavefront was discussed in \cite{ford2018wave} for the Euclidean manifold with conic singularities. It is worth to point out that the diffractive propagator is the same whether we pass through such a phase shift; the assumptions are designed purely to cancel out singularities at the geometric wavefront. 
\end{remark}

A straightforward computation shows that 
$$\Upsilon_s\left(t,q_1,q_2\right)=\left[(\Phi^s_{T_x})^* \Upsilon_0\right]\left(t,q_1,q_2\right).$$
The diffractive propagator therefore can be written as 
\begin{equation}
\label{Diffrctive Propagator}
    E_{\text{D}}(t)=\int^{\infty}_0 \frac{-\sin\pi\alpha \cdot e^{i\alpha\Delta\theta}}{4\pi\sqrt{r_1(s)r_2(s)}}\left(\int_{\RR}e^{i\lambda(t-r_2(s)-r_1(s))}\tilde{a}(r_1(s),r_2(s),\lambda)d\lambda\right)(e^{-i\theta_1(s)}+e^{i\theta_2(s)})ds,
\end{equation}
where $r_1(s)=\sqrt{(x_1+s)^2+y_1^2}$, $r_2(s)=\sqrt{(x_2+s)^2+y_2^2}$,
 $\theta_1(s)=\arctan(\frac{y_1}{x_1+s})$, $\theta_2(s)=\arctan(\frac{y_2}{x_2+s})$ and $\Delta\theta=(\theta_1(s)-\theta_2(s))-(\theta_1-\theta_2)$. Use the Fourier transform of the Heaviside function and make the change of variable $\rho=\lambda\mu$:   
\begin{equation*}
    \begin{split}
        & \int^{\infty}_0 \frac{ e^{i\alpha\Delta\theta}}{\sqrt{r_1(s)r_2(s)}}\left(\int_{\RR}e^{i\lambda(t-r_2(s)-r_1(s))}\tilde{a}(r_1(s),r_2(s),\lambda)d\lambda\right)(e^{-i\theta_1(s)}+e^{i\theta_2(s)})ds\\
        =& \int_{\RR} H(s)\frac{ e^{i\alpha\Delta\theta}}{\sqrt{r_1(s)r_2(s)}}\left(\int_{\RR}e^{i\lambda(t-r_2(s)-r_1(s))}\tilde{a}(r_1(s),r_2(s),\lambda)d\lambda\right)(e^{-i\theta_1(s)}+e^{i\theta_2(s)})ds\\
        =& \int_{\RR}\int_{\RR}e^{is\rho}\frac{1}{\rho+i0}\frac{ e^{i\alpha\Delta\theta}}{\sqrt{r_1(s)r_2(s)}}\left(\int_{\RR}e^{i\lambda(t-r_2(s)-r_1(s))}\tilde{a}(r_1(s),r_2(s),\lambda)d\lambda\right)(e^{-i\theta_1(s)}+e^{i\theta_2(s)})d\rho ds\\
        =& \int_{\RR}\int_{\RR}e^{is\lambda\mu}\frac{1}{\mu+i0}\frac{ e^{i\alpha\Delta\theta}}{\sqrt{r_1(s)r_2(s)}}\left(\int_{\RR}e^{i\lambda(t-r_2(s)-r_1(s))}\tilde{a}(r_1(s),r_2(s),\lambda)d\lambda\right)(e^{-i\theta_1(s)}+e^{i\theta_2(s)})d\mu ds\\
        =& \int_{\RR}\int_{\RR}\int_{\RR}e^{i\lambda (s\mu+(t-r_2(s)-r_1(s))}\frac{1}{\mu+i0}\frac{ e^{-i\alpha\Delta\theta}}{\sqrt{r_1(s)r_2(s)}}\tilde{a}(r_1(s),r_2(s),\lambda)(e^{-i\theta_1(s)}+e^{i\theta_2(s)})d\lambda d\mu ds. 
    \end{split}
\end{equation*}
Now we apply the stationary phase lemma in variables $(\mu,s)$ to the above integral, with the phase function $\phi=\mu s+(t-r_1(s)-r_2(s))$. The non-degenerate critical point is at $\mu=r_1'(0)+r_2'(0), s=0$. Thus its leading order singularity is given by
\begin{equation*}
    \begin{split}
       \frac{2\pi}{\sqrt{r_1r_2}}\int_{\RR}e^{i\lambda (t-r_2-r_1)}\frac{1}{r_1'(0)+r_2'(0)}(e^{-i\theta_1}+e^{i\theta_2})\lambda^{-1}d\lambda. 
    \end{split}
\end{equation*}
Therefore the diffraction coefficient, i.e., the principal symbol of the conormal distribution \eqref{Diffrctive Propagator}, is
\begin{equation*}
       -\frac{\sin\pi\alpha}{2\sqrt{r_1r_2}}\cdot\frac{e^{-i\theta_1}+e^{i\theta_2}}{\cos\theta_1+\cos\theta_2}\cdot\lambda^{-1}. 
\end{equation*}
Thus we summarize to the following theorem: 
\begin{theorem}
\label{theorem1}
For $|\theta_1-\theta_2|\neq\pi$, the diffractive Aharonov--Bohm propagator $E_{\emph{D}}(t)$ is a polyhomogenous conormal distribution with respect to $\{t=r_1+r_2\}$ with the oscillatory integral representation:
\begin{equation}
     \int_{\RR}e^{i\lambda(t-r_2-r_1)}a(r_1,r_2,\theta_1,\theta_2,\lambda)d\lambda.
\end{equation}
In particular, the principal symbol of $a$ is given by 
\begin{equation}
    a_0=-\frac{\sin\pi\alpha}{2\sqrt{r_1r_2}}\cdot\frac{e^{-i\theta_1}+e^{i\theta_2}}{\cos\theta_1+\cos\theta_2}\cdot\lambda^{-1},
\end{equation}
where $q_1=(r_1,\theta_1),q_2=(r_2,\theta_2)$ with $\alpha\in(0,1)$ depending on the magnetic flux at the solenoid. 
\end{theorem}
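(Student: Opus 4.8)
The plan is to combine the moving-solenoid identity \eqref{ftc}, which expresses $E_{\text{D}}(t) \equiv -\int_0^\infty \Upsilon_s(t, q_1, q_2)\, ds$ modulo smooth remainders, with the explicit formula for the differentiated propagator from Proposition \ref{D_full_propagator}, and then to reduce the whole statement to a single stationary-phase computation in two auxiliary variables. First I would invoke the identity $\Upsilon_s = (\Phi^s_{T_x})^*\Upsilon_0$ and carry out the pullback: this replaces the data $(r_1, r_2, \theta_1, \theta_2)$ by the translated data $r_j(s) = |(x_j + s, y_j)|$ and $\theta_j(s) = \arctan\!\big(y_j/(x_j + s)\big)$, and multiplies by the twisting phase $e^{i\alpha\Delta\theta}$ with $\Delta\theta = (\theta_1(s) - \theta_2(s)) - (\theta_1 - \theta_2)$, exactly as in \eqref{trans_E}. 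Substituting into \eqref{ftc} produces the representation \eqref{Diffrctive Propagator} of $E_{\text{D}}(t)$ as an integral over $s \in [0, \infty)$.

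The second step turns this half-line integral into a genuine oscillatory integral amenable to stationary phase. Writing the Heaviside cutoff as $H(s) = \frac{1}{2\pi}\int_{\RR} e^{is\rho}\frac{d\rho}{\rho + i0}$ and rescaling $\rho = \lambda\mu$ yields a total phase $\lambda\,\phi(s, \mu)$ with
\[
\phi(s, \mu) = \mu s + t - r_1(s) - r_2(s),
\]
homogeneous of degree one in $\lambda$, against an amplitude $\frac{1}{\mu + i0}\cdot\frac{e^{i\alpha\Delta\theta}}{\sqrt{r_1(s)\,r_2(s)}}\,\tilde a\big(r_1(s), r_2(s), \lambda\big)\big(e^{-i\theta_1(s)} + e^{i\theta_2(s)}\big)$ that is smooth in $(s,\mu)$. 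The critical set is $\{\partial_\mu\phi = s = 0,\ \partial_s\phi = \mu - r_1'(s) - r_2'(s) = 0\}$, a single nondegenerate point $(s, \mu) = \big(0,\, r_1'(0) + r_2'(0)\big)$ with Hessian $\left(\begin{smallmatrix} 0 & 1 \\ 1 & -r_1''(0) - r_2''(0)\end{smallmatrix}\right)$, of determinant $-1$ and signature $0$. This is precisely where the hypothesis $|\theta_1 - \theta_2| \neq \pi$ enters: it is what permits the normalization $\theta_1, \theta_2 \in (-\pi/2, \pi/2)$ made earlier by rotational symmetry, so that the critical value $\mu = \cos\theta_1 + \cos\theta_2 > 0$ stays off the singular locus $\mu = 0$ of $1/(\mu + i0)$, and so that the amplitude is non-stationary and decaying (via $1/\sqrt{r_1(s)\,r_2(s)}$) elsewhere in $s$. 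Applying the stationary phase lemma in $(s, \mu)$ then gives a one-step classical symbol expansion in $\lambda$ with reduced phase $\phi|_{s = 0} = t - r_1 - r_2$ and a net gain of $\lambda^{-1}$; combined with $\tilde a \in S^0_{phg}$ from Proposition \ref{D_full_propagator}, this shows $E_{\text{D}}(t)$ is a polyhomogeneous conormal distribution associated to $\{t = r_1 + r_2\}$ with amplitude $a \in S^{-1}_{phg}$.

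For the principal symbol I would read off the leading term of that expansion. At the critical point $s = 0$ one has $\Delta\theta = 0$, $r_j(0) = r_j$, $\theta_j(0) = \theta_j$, and from $r_j(s) = |(x_j + s, y_j)|$ one gets $r_j'(0) = x_j/r_j = \cos\theta_j$; the stationary-phase constant is $2\pi/|\det\mathrm{Hess}|^{1/2} = 2\pi$ with trivial phase factor (signature $0$), the factor $1/(\mu + i0)$ contributes $1/\big(r_1'(0) + r_2'(0)\big) = 1/(\cos\theta_1 + \cos\theta_2)$, and $\tilde a \to 1$. Multiplying by the prefactor $-\sin\pi\alpha/\big(4\pi\sqrt{r_1 r_2}\big)$ of \eqref{Diffrctive Propagator} yields the claimed principal symbol
\[
a_0 = -\frac{\sin\pi\alpha}{2\sqrt{r_1 r_2}}\cdot\frac{e^{-i\theta_1} + e^{i\theta_2}}{\cos\theta_1 + \cos\theta_2}\cdot\lambda^{-1}.
\]

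The main obstacle I anticipate is the careful justification of the $s$-integral: pairing the distributional factor $1/(\mu + i0)$ with the oscillatory amplitude, controlling the non-stationary contribution uniformly in $\lambda$ (including the far region in $s$, where $r_j(s)$ can fail to be smooth if $q_j$ lies on the translation axis), and—more substantively—verifying that the asymptotic series produced by stationary phase genuinely assembles into an element of $S^{-1}_{phg}$ at all orders, so that the full polyhomogeneity claim, not merely the value of $a_0$, is established.
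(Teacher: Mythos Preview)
Your proposal is correct and follows essentially the same route as the paper: pull back $\Upsilon_0$ along the twisted translation to obtain \eqref{Diffrctive Propagator}, rewrite the half-line $s$-integral via the Fourier representation of the Heaviside function with the rescaling $\rho=\lambda\mu$, and apply stationary phase in $(s,\mu)$ at the unique nondegenerate critical point $(0,\cos\theta_1+\cos\theta_2)$. If anything you are more explicit than the paper about the Hessian, the role of the hypothesis $|\theta_1-\theta_2|\neq\pi$ in keeping the critical $\mu$ away from the pole of $1/(\mu+i0)$, and the analytic caveats concerning full polyhomogeneity---points the paper passes over without comment.
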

\begin{remark}
We can verify the principal symbol of the diffractive propagator formally using the diffractive coefficient computed in \cite{ford2017diffractive} and \cite{yang2020diffraction}. The diffraction coefficient of $e^{-it\sqrt{\pa}}$ away from $|\theta_1-\theta_2|=\pi$ is given by $-i(r_1r_2)^{-1/2}e^{-i\pi\nu}$ acting diagonally on each mode. Therefore, summing all the modes, the diffraction coefficient can be formally computed as 
\begin{equation*}
\begin{split}
     -i\sum_{k\in\mathbb{Z}}e^{-i\pi|k+\alpha|}e^{ik(\theta_1-\theta_2)}
     &= (-i) e^{-i\alpha\pi}\delta(\theta_1-\theta_2-\pi)-\sin(\pi\alpha)\frac{e^{-i\theta_1}+e^{i\theta_2}}{\cos\theta_1+\cos\theta_2}\\
     &\equiv -\sin(\pi\alpha)\frac{e^{-i\theta_1}+e^{i\theta_2}}{\cos\theta_1+\cos\theta_2}
\end{split}
\end{equation*}
modulo singularities at $|\theta_1-\theta_2|=\pi$, and the diffraction coefficient of $\frac{\sin(t\sqrt{\pa})}{\sqrt{\pa}}$ is therefore
\begin{equation*}
     \sum_{k\in\mathbb{Z}}\frac{-i}{2|\lambda|}(H(\lambda)e^{-i\pi|k+\alpha|}+H(-\lambda)e^{i\pi|k+\alpha|})e^{ik(\theta_1-\theta_2)}\equiv -\frac{1}{2\lambda}\cdot\sin(\pi\alpha)\cdot\frac{e^{-i\theta_1}+e^{i\theta_2}}{\cos\theta_1+\cos\theta_2}
\end{equation*}
modulo singularities at $|\theta_1-\theta_2|=\pi$, which agrees with the result in Theorem \ref{theorem1}.
\end{remark}

\begin{remark}
The principal amplitude of diffractive singularities depends on the magnetic flux $\alpha$. In particular, it vanishes as $\alpha\rightarrow\mathbb{Z}$. 
\end{remark}

\bibliography{reference}
\bibliographystyle{alpha}

\end{document}